\documentclass[hidelinks,onefignum,onetabnum]{siamart220329}

\ifpdf
\hypersetup{
  pdftitle={Convergence of the Dirichlet--Neumann method for semilinear elliptic equations},
  pdfauthor={Emil Engström}
}
\fi

\usepackage{amsmath}
\usepackage{amssymb}
\usepackage{subcaption}

\usepackage{xcolor}
\usepackage{hyperref}
\hypersetup{colorlinks=true, linkcolor=blue, citecolor=blue}

\newsiamremark{assumption}{Assumption}
\crefname{assumption}{Assumption}{Assumptions}
\newsiamremark{remark}{Remark}
\crefname{remark}{Remark}{Remarks}
\newsiamremark{example}{Example}
\crefname{example}{Example}{Examples}
\crefname{section}{Section}{Sections}

\newcommand{\R}{\mathbb{R}}


\newcommand\restr[2]{\ensuremath{\left.#1\right\vert_{#2}}}

\begin{document}
    \title{Convergence of the Dirichlet--Neumann method for semilinear elliptic equations\thanks{Version of  \today.
\funding{This work was supported by the Swedish Research Council under the grant  2023--04862}}}
    \author{Emil Engstr\"om\thanks{Centre for Mathematical Sciences, Lund University,
P.O.\ Box 118, SE-22100 Lund, Sweden, (\email{emil.engstrom@math.lth.se}).}}
	\maketitle
 \begin{abstract}
     The Dirichlet--Neumann method is a common domain decomposition method for nonoverlapping domain decomposition and the method has been studied extensively for linear elliptic equations. However, for nonlinear elliptic equations, there are only convergence results for some specific cases in one spatial dimension. The aim of this manuscript is therefore to prove that the Dirichlet--Neumann method converges for a class of semilinear elliptic equations on Lipschitz continuous domains in two and three spatial dimensions. This is achieved by first proving a new result on the convergence of nonlinear iterations in Hilbert spaces and then applying this result to the Steklov--Poincaré formulation of the Dirichlet--Neumann method.
 \end{abstract}

\begin{keywords}
Nonlinear domain decomposition, Dirichlet--Neumann method, linear convergence, semilinear elliptic equations
\end{keywords}

\begin{MSCcodes}
65N55, 35J61, 47J25
\end{MSCcodes}

 \section{Introduction}
 We consider the semilinear elliptic equation with homogeneous Dirichlet boundary conditions
\begin{equation}\label{eq:strong}
    \left\{
         \begin{aligned}
                -\nabla\cdot\bigl(\alpha(x)\nabla u\bigr)+\beta(x, u)&=f  & &\text{in }\Omega,\\
                 u&=0 & &\text{on }\partial\Omega.
            \end{aligned}
    \right.
\end{equation}
Here, $\Omega\subset\R^d$ with $d=2, 3$ is a Lipschitz domain and $\partial\Omega$ denotes the boundary of $\Omega$. In order to solve~\cref{eq:strong} in parallel it is a common practice to employ a nonoverlapping domain decomposition method. For linear elliptic equations, the convergence theory of such methods is well established~\cite{quarteroni,widlund}. For overlapping Schwarz methods applied to nonlinear elliptic equations there is also a general theory, see, e.g.~\cite{dryja97,tai98,tai02}. However, concerning nonoverlapping domain decompositions for nonlinear elliptic equations there are few results. Some exceptions are~\cite{berninger11} that show convergence of the Dirichlet--Neumann and Robin--Robin method for a class of nonlinear elliptic problems and~\cite{gander22}, in which quadratic convergence is proven for specific parameter choices of the Dirichlet--Neumann method. Both of these results are only proven on one-dimensional domains, however.

For convergence on general Lipschitz domains in $\R^d$, $d=2, 3$ there are the results~\cite{eeeh22,eeeh24apnum} concerning the Robin--Robin method and modified Neumann--Neumann methods, respectively. However, the convergence rate of the Robin--Robin method deteriorates as the mesh size $h$ is made smaller~\cite{lui09}. Moreover, the Neumann--Neumann method does not seem to converge at all for some nonlinear problems~\cite{eeeh24apnum} and, while the modified Neumann--Neumann methods perform better in this regard, their convergence depend on the choice of an auxiliary problem, which may not be obvious how to choose. The Dirichlet--Neumann method, on the other hand, is expected to have mesh independent convergence and does not require an auxiliary problem. Moreover, our numerical results indicate that the method converges faster than the Robin--Robin method and converges in some cases when the Neumann--Neumann method does not converge at all. To our knowledge there are currently no general convergence proofs for the Dirichlet--Neumann method as there is for the Robin--Robin and modified Neumann-Neumann methods. The aim of this manuscript is therefore to prove that the Dirichlet--Neumann method converges for a large class of semilinear elliptic equations without restrictive regularity assumptions on the equation or the domain.

We decompose the domain $\Omega$ into the subdomains $\Omega_1$ and $\Omega_2$ such that
\begin{displaymath}
\overline{\Omega}=\overline{\Omega}_1\cup\overline{\Omega}_2,\quad \Omega_1\cap\Omega_2=\emptyset\quad\text{and}\quad\Gamma=(\partial\Omega_1\cap\partial\Omega_2)\setminus\partial\Omega,
\end{displaymath}
where we refer to $\Gamma$ as the interface of the decomposition. See~\cref{fig:dom1} for an example of such a decomposition. It is also possible to consider two families of nonadjacent subdomains as in~\cref{fig:dom2}, which will yield a method that can be implemented in parallel. We do not consider decompositions with cross-points, but these have been studied for the Dirichlet--Neumann method applied to linear equations in~\cite{dumas}.
\begin{figure}
\centering
\begin{subfigure}{.4\textwidth}
  \centering
  \includegraphics[width=1.0\linewidth]{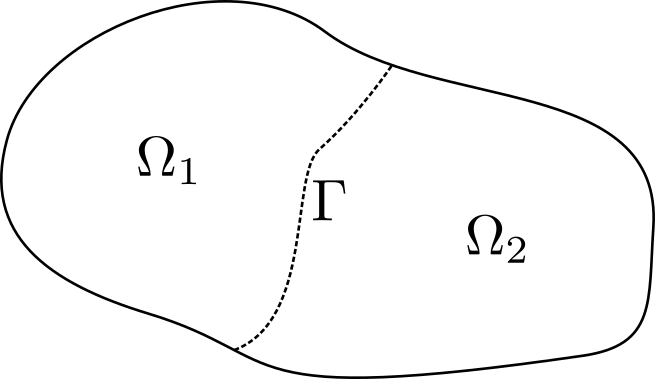}
  \caption{A decomposition with two subdomains $\Omega_1$ and $\Omega_2$}
  \label{fig:dom1}
\end{subfigure}%
\hspace{1em}
\begin{subfigure}{.4\textwidth}
  \centering
  \includegraphics[width=1.0\linewidth]{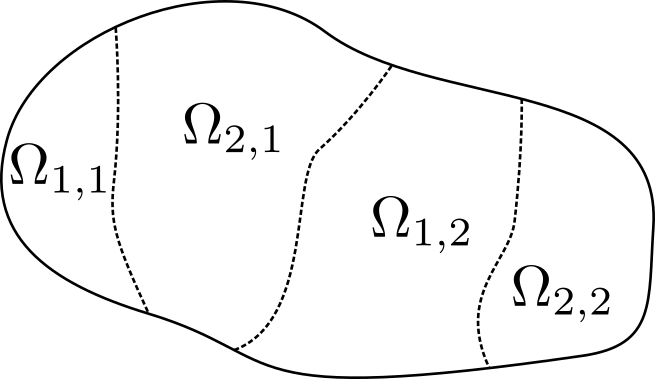}
  \caption{A decomposition into two families of subdomains $\{\Omega_{1,\ell}\}$ and $\{\Omega_{2,\ell}\}$.}
  \label{fig:dom2}
\end{subfigure}
\end{figure}

The Dirichlet--Neumann method applied to the semilinear equation~\cref{eq:strong} is the following. Let $f_i=\restr{f}{\Omega_i}$ for $i=1, 2$. For a method parameter $s>0$ and an initial guess $\eta^0$ on $\Gamma$, we find $(u_1^{n+1}, u_2^{n+1})$ for $n=0,1,2,\ldots,$  such that
\begin{equation}\label{eq:dn}
\left\{
\begin{aligned}
-\nabla\cdot(\alpha(x)\nabla u^{n+1}_1)+\beta(x, u^{n+1}_1)&=f_1  & &\text{in }\Omega_1,\\
u^{n+1}_1&=0 & &\text{on }\partial\Omega_1\setminus\Gamma,\\
u^{n+1}_1 &= \eta^n & &\text{on }\Gamma,\\[5pt]
-\nabla\cdot(\alpha(x)\nabla  u^{n+1}_2)+\beta(x, u^{n+1}_2)&=f_2  & &\text{in }\Omega_2,\\
u^{n+1}_2&=0 & &\text{on }\partial\Omega_2\setminus\Gamma,\\
\alpha(x)\nabla u^{n+1}_2\cdot\nu_2 &=  \alpha(x)\nabla u^{n+1}_1\cdot\nu_2 & &\text{on }\Gamma,
\end{aligned}
\right.
\end{equation}
with $\eta^{n+1}=s\restr{u_2^{n+1}}{\Gamma}+(1-s)\eta^n$. Here, $\nu_2$ denotes the outwards pointing unit normal of $\Omega_2$. As will be shown, the Dirichlet--Neumann method corresponds to the interface iteration
\begin{equation}\label{eq:dn_intro}
    \eta^{n+1} = (1-s)\eta^n+sS_2^{-1}(0-S_1\eta^n),
\end{equation}
where $S_i$ for $i=1, 2$ are the Steklov--Poincaré operators.

The paper is organized in the following way. In~\cref{sec:abstract} we present a new convergence result for iterations of the form~\cref{eq:dn_intro} in Hilbert spaces. We then introduce the weak formulations of the nonlinear operators in~\cref{eq:dn} and show some important properties of these in~\cref{sec:weak}. In~\cref{sec:ddm} we introduce the domain decomposition, transmission problem and Steklov--Poincaré operators. We then show that our nonlinear Steklov--Poincaré operators satisfy the conditions of our abstract theorem. In~\cref{sec:dn} we introduce the weak formulation of the Dirichlet--Neumann method and show that it can be interpreted as an interface iteration. We then conclude that the Dirichlet--Neumann method converges. Finally, we give some numerical results that verify our convergence theorem in~\cref{sec:num} and compare the convergence rate with the Robin--Robin and Neumann--Neumann methods. We also give some numerical results for an equation not satisfying the conditions to demonstrate the generality of the nonlinear Dirichlet--Neumann method. Throughout the manuscript we will denote arbitrary positive constants that may change between lines by $c, C>0$.
\section{Abstract theorem on nonlinear splitting in Hilbert spaces}\label{sec:abstract}
    Let $X$ and $Y$ be two Hilbert spaces. We denote the space of bounded linear operators from $X$ to $Y$ by $B(X, Y)$. We say that a (nonlinear) operator $G:X\rightarrow Y$ is bounded if for any bounded set $U$, the set $G(U)$ is bounded. Moreover, $G$ is Lipschitz continuous on a subset $U\subset X$ if there exists a constant $C>0$ such that
     \begin{displaymath}
         \|G\mu-G\lambda\|_Y\leq C\|\mu-\lambda\|_X\qquad\text{for all }\mu, \lambda\in U.
     \end{displaymath}
 The operator $G$ is Fréchet differentiable on the subset $U\subset X$ if there exists an operator, the Fréchet derivative, $G':U\rightarrow B(X, Y)$ such that, for any $\mu\in U$, we have
 \begin{displaymath}
     \lim_{h\rightarrow 0}\frac{\|G(\mu+h)-G(\mu)-G'(\mu)h\|_Y}{\|h\|_X}=0.
 \end{displaymath}
 If $G$ is Lipschitz continuous or Fréchet differentiable on $U=X$, we simply say that $G$ is Lipschitz continuous or Fréchet differentiable, respectively. We now recall the Fundamental theorem of calculus. For a reference, see the proof of~\cite[Lemma 9.12]{teschl}. If $G$ is Fréchet differentiable on a convex set $U$ with continuous Fréchet derivative $G'$ we have, for $\mu, \lambda\in U$,
\begin{equation}\label{eq:ftc}
    G\mu-G\lambda=\int_0^1 G'\bigl(\lambda+t(\mu-\lambda)\bigr)(\mu-\lambda)\mathrm{d}t.
\end{equation}
Here, and for the rest of the manuscript, the integral should be interpreted as the Bochner integral, see~\cite[Chapter 9]{teschl}.

For a Hilbert space $X$ we denote its dual by $X^*$ and the dual pairing by $\langle \cdot, \cdot\rangle_{X^*\times X}$. If the spaces are obvious from the context we denote the dual paring by $\langle\cdot, \cdot\rangle$. The Bochner integral on $X^*$ satisfies
\begin{equation}\label{eq:intinner}
    \Bigl\langle\int_0^1\mu(t)\mathrm{d}t, \lambda\Bigr\rangle=\int_0^1 \langle \mu(t), \lambda\rangle\mathrm{d}t\qquad\text{for all } \mu\in C\bigl([0, 1], X^*\bigr), \lambda\in X,
\end{equation}
see~\cite[(9.7)]{teschl}. We say that a linear operator $P:X\rightarrow X^*$ is symmetric if
    \begin{displaymath}
     \langle P\mu, \lambda\rangle=\langle P\lambda, \mu\rangle\qquad\text{for all }\mu, \lambda\in X
    \end{displaymath}
    and denote the subspace of symmetric and bounded linear operators by $S(X, X^*)\subset B(X, X^*)$. Moreover, we say that $P$ is coercive if
     \begin{displaymath}
         \langle P\mu, \mu\rangle\geq c\|\mu\|_X^2\qquad\text{for all }\mu\in X.
     \end{displaymath}

 For $G:X\rightarrow X^*$ Lipschitz continuity on $U$ is equivalent to
     \begin{displaymath}
         |\langle G\mu-G\lambda, \nu\rangle|\leq C\|\mu-\lambda\|_X\|\nu\|_X\qquad\text{for all }\mu, \lambda\in U, \nu\in X.
     \end{displaymath}
 Furthermore, we say that $G:X\rightarrow X^*$ is uniformly monotone if there exists a constant $c>0$ such that
 \begin{displaymath}
     \langle G\mu-G\lambda, \mu-\lambda\rangle\geq c\|\mu-\lambda\|_X^2\qquad\text{for all }\mu, \lambda\in X.
 \end{displaymath}

 We now consider a Hilbert space $X$, a nonlinear operator $G:X\rightarrow X^*$, and $\chi\in X^*$. The abstract problem is to find $\eta\in X$ such that
 \begin{equation}\label{eq:abstracteq}
     G\eta=\chi.
 \end{equation}
 A nonlinear splitting can be defined in the following way. Suppose that
 \begin{displaymath}
    G=G_1+G_2,
\end{displaymath}
where $G_i:X\rightarrow X^*$, $i=1, 2$. For some method parameter $s>0$ and initial guess $\eta^0$ the iterative method is
\begin{equation}\label{eq:abstractit}
    \eta^{n+1} = (1-s)\eta^n+sG_2^{-1}(\chi-G_1\eta^n).
\end{equation}
The main abstract result of this section is the convergence of this iteration.
 \begin{theorem}\label{thm:abstract}
    Let $X$ be a (real) Hilbert space, $G:X \rightarrow X^*$ be a uniformly monotone operator, $\chi\in X^*$, and $\eta^0\in X$. 
    \begin{itemize}
        \item Suppose that for any bounded subset $\Tilde{U}\subset X$ the operator $G$ is Lipschitz continuous on $\Tilde{U}$. Then $G:X\rightarrow X^*$ is bijective and, in particular, there exists a unique solution to~\cref{eq:abstracteq}. Moreover, the inverse $G^{-1}:X^*\rightarrow X$ is Lipschitz continuous.
        \item Let $G=G_1+G_2$, where $G_1, G_2$ are uniformly monotone. Suppose that for any bounded subset $\Tilde{U}\subset X$ the nonlinear operator $G_2$ is Lipschitz continuous on $\Tilde{U}$. Then $G_2:X\rightarrow X^*$ is bijective and the iteration~\cref{eq:abstractit} is well defined. Moreover, the inverse $G_2^{-1}:X^*\rightarrow X$ is Lipschitz continuous.
        \item In addition to the above, let $U\subset X$ be a bounded, open, and convex subset such that the solution $\eta$ of~\cref{eq:abstracteq} satisfies $\eta\in U$. Suppose that $G_1$ is Lipschitz continuous on $U$. Moreover, suppose that $G_2$ is Fréchet differentiable on $U$ with Lipschitz continuous Fréchet derivative $G_2': U\rightarrow S(X, X^*)$. Then, for $\eta^0$ close enough to $\eta$ and sufficiently small $s>0$, the iteration~\cref{eq:abstractit} converges to $\eta$. The convergence is linear, i.e., there exists constants $C>0$ and $0\leq L < 1$ such that
    \begin{displaymath}
    	   \|\eta^n-\eta\|_X\leq C L^n\|\eta^0-\eta\|_X.
    \end{displaymath}
    \end{itemize}
\end{theorem}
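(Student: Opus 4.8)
I would obtain the first two bullets from the standard theory of monotone operators. An operator that is uniformly monotone and Lipschitz continuous on every bounded set is, in particular, continuous and coercive, so the Browder--Minty theorem gives surjectivity, while injectivity is immediate from uniform monotonicity; and if $G\mu=\chi_1$, $G\lambda=\chi_2$ then
\begin{displaymath}
    c\|\mu-\lambda\|_X^2\leq\langle G\mu-G\lambda,\mu-\lambda\rangle=\langle\chi_1-\chi_2,\mu-\lambda\rangle\leq\|\chi_1-\chi_2\|_{X^*}\|\mu-\lambda\|_X,
\end{displaymath}
which is exactly Lipschitz continuity of the inverse with constant $1/c$. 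Applying this to $G$ and to $G_2$ yields the first two bullets and, in particular, makes the iteration~\cref{eq:abstractit} well defined.

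For the third bullet I would set $e^n=\eta^n-\eta$ and $\psi^n=G_2^{-1}(\chi-G_1\eta^n)$. Since $\eta$ solves~\cref{eq:abstracteq} we have $\eta=G_2^{-1}(\chi-G_1\eta)$, so $\eta$ is a fixed point of the iteration map and $e^{n+1}=(1-s)e^n+s(\psi^n-\eta)$. From $G_2\psi^n-G_2\eta=-(G_1\eta^n-G_1\eta)$, the Lipschitz continuity of $G_2^{-1}$ just established, and that of $G_1$ on $U$, one gets $\|\psi^n-\eta\|_X\leq C\|e^n\|_X$ as soon as $\eta^n\in U$; shrinking the admissible size of $e^n$ further forces the whole segment $[\eta,\psi^n]$ into $U$. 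On that segment $G_2$ is $C^1$, so the fundamental theorem of calculus~\cref{eq:ftc}, together with~\cref{eq:intinner} and the fact that the Bochner integral commutes with bounded linear maps, gives
\begin{displaymath}
    A^n(\psi^n-\eta)=-(G_1\eta^n-G_1\eta),\qquad A^n:=\int_0^1 G_2'\bigl(\eta+t(\psi^n-\eta)\bigr)\,\mathrm{d}t.
\end{displaymath}
Differentiating the uniform monotonicity of $G_2$ shows that $G_2'(\mu)$ is coercive with the same constant for $\mu\in U$, hence so is $A^n$; combined with Lipschitz continuity of $G_2'$, this yields $\|A^n-P\|_{B(X,X^*)}\leq C\|e^n\|_X$, where $P:=G_2'(\eta)\in S(X,X^*)$.

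The crucial idea is to measure $e^{n+1}$ not in the norm of $X$ but in the equivalent norm $\|\mu\|_P^2:=\langle P\mu,\mu\rangle$ (the nonlinear counterpart of using the $G_2$--inner product in the linear theory; it is essential, since it is $P$, not the Riesz map of $X$, that appears when one tests the recursion, so that the small perturbation $A^n-P$ is the quantity that actually enters). Expanding $\|e^{n+1}\|_P^2$ and using symmetry of $P$ to rewrite the cross term,
\begin{displaymath}
    \langle Pe^n,\psi^n-\eta\rangle=\langle A^n(\psi^n-\eta),e^n\rangle+\langle(P-A^n)(\psi^n-\eta),e^n\rangle=-\langle G_1\eta^n-G_1\eta,e^n\rangle+\langle(P-A^n)(\psi^n-\eta),e^n\rangle,
\end{displaymath}
I would bound the first term by $-c\|e^n\|_X^2$ using uniform monotonicity of $G_1$, the second by $C\|e^n\|_X^3$, and the remaining quadratic term $s^2\|\psi^n-\eta\|_P^2$ by $C\|e^n\|_X^2$. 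Passing between $\|\cdot\|_X$ and $\|\cdot\|_P$ with the equivalence $c\|\mu\|_X^2\leq\|\mu\|_P^2\leq\|P\|\,\|\mu\|_X^2$ gives $\|e^{n+1}\|_P^2\leq\bigl(\phi(s)+Cs\|e^n\|_X\bigr)\|e^n\|_P^2$ with $\phi(s)=(1-s)^2-c s(1-s)+C s^2$ for data-dependent constants $c,C>0$. Since $\phi(0)=1$ and $\phi'(0)<0$, I would first fix $s\in(0,1)$ small with $q:=\phi(s)<1$, then fix $\rho>0$ small enough that $\{\|\mu-\eta\|_X\leq\rho\}\subset U$, the segment condition above holds, and $Cs\rho\leq(1-q)/2$; then $\|e^n\|_X\leq\rho$ implies $\|e^{n+1}\|_P\leq L\|e^n\|_P$ with $L=\sqrt{(1+q)/2}<1$. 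An induction, using the norm equivalence to see that the constraint $\|e^n\|_X\leq\rho$ persists once $\|\eta^0-\eta\|_X$ is small enough, yields $\|e^n\|_P\leq L^n\|e^0\|_P$ and hence $\|\eta^n-\eta\|_X\leq CL^n\|\eta^0-\eta\|_X$ with $C=(\|P\|/c)^{1/2}$.

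The main obstacle I anticipate is the cross term $\langle Pe^n,\psi^n-\eta\rangle$: it must be shown to be negative enough to overcome the factor $(1-s)^2\to1$ as $s\to0$. This succeeds only because the estimate is carried out in the $P$--norm — so that $A^n-P$, rather than a (non-small) difference between the Riesz isomorphism of $X$ and $G_2'$, is what enters — and because the non-differentiable increment $G_1\eta^n-G_1\eta$ is controlled through monotonicity rather than by linearization; symmetry of $G_2'(\eta)$ is exactly what makes the $P$--inner product available and the cross term amenable to this rewriting. The remaining work — keeping every iterate, and every segment needed for~\cref{eq:ftc}, inside $U$, and tracking the constants — is routine bookkeeping.
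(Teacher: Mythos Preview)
Your argument is correct and close in spirit to the paper's, but the execution differs in an instructive way. Both proofs handle the first two bullets identically via Browder--Minty and uniform monotonicity. For the third bullet, both measure the error in an inner product adapted to $G_2$: the paper works with the nonlinear energy $\langle G_2\mu-G_2\eta,\mu-\eta\rangle$ itself, whereas you linearize at the fixed point and use $\|\cdot\|_P^2=\langle G_2'(\eta)\cdot,\cdot\rangle$. Accordingly, the paper expands $\langle G_2K\mu-G_2K\lambda,K\mu-K\lambda\rangle$ for \emph{two} generic points $\mu,\lambda$, which forces it to track three segments $x(t),y(t),z(t)$ (joining $K\mu$ to $K\lambda$, $G_2^{-1}(\chi-G_1\mu)$ to $G_2^{-1}(\chi-G_1\lambda)$, and $\mu$ to $\lambda$) and to swap $G_2'(x(t))$ for $G_2'(y(t))$ or $G_2'(z(t))$ at a cost controlled by the Lipschitz constant of $G_2'$; you need only the single segment $[\eta,\psi^n]$ and the single replacement $A^n\mapsto P$. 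Both routes land on the same contraction polynomial $(1-s)^2-cs(1-s)+Cs^2$ plus a higher-order remainder that is absorbed by shrinking the neighborhood. Your version is shorter because it only ever compares an iterate to $\eta$; the paper in fact establishes the stronger two-point estimate $\langle G_2K\mu-G_2K\lambda,K\mu-K\lambda\rangle\le L\langle G_2\mu-G_2\lambda,\mu-\lambda\rangle$ for all $\mu,\lambda$ in a neighborhood of $\eta$, though only the case $\lambda=\eta$ is used for the stated conclusion.
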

\begin{proof}
    For the first two statements we use the Browder--Minty theorem~\cite[Theorem 26.A]{zeidler}. We show the required properties only of $G$ since the case of $G_2$ follows in the same way. The first property is coercivity, i.e., if $\|\mu\|_X\rightarrow \infty$ then 
    \begin{displaymath}
        \frac{\langle G\mu, \mu\rangle}{\|\mu\|_X}\rightarrow\infty.
    \end{displaymath}
    This follows immediately from the uniform monotonicity of $G$ since
    \begin{displaymath}
        \langle G\mu, \mu\rangle=\langle G\mu-G(0), \mu-0\rangle+\langle G(0), \mu\rangle\geq c\|\mu\|_X^2-\|G(0)\|_{X^*}\|\mu\|_X.
    \end{displaymath}
    The fact that $G$ is monotone, i.e.,
    \begin{displaymath}
        \langle G\mu-G\lambda, \mu-\lambda\rangle \geq 0,
    \end{displaymath}
    is obvious from the uniform monotonicity. Finally, we show that the operator $G$ is demicontinuous, i.e., that
        \begin{displaymath}
            \langle G\mu^n-G\mu, \lambda\rangle\rightarrow 0
    \end{displaymath}
     as $\mu^n\rightarrow\mu$ in $X$ for all $\lambda\in X$. To see this, let $\mu^n\rightarrow\mu$. Then $\|\mu^n\|\leq C$ and therefore there exists a bounded set $\Tilde{U}$ such that $\{\mu^n\}\cup\{\mu\}\subset \Tilde{U}$. Since $G$ is Lipschitz continuous on $\Tilde{U}$, we have that
        \begin{displaymath}
            \langle G\mu^n-G\mu, \lambda\rangle\leq C(\Tilde{U})\|\mu^n-\mu\|_X\|\lambda\|_X\rightarrow 0
    \end{displaymath}
    for all $\lambda\in X$. We can now apply the Browder--Minty theorem to conclude that $G$ is bijective. To see that the inverse is Lipschitz continuous, we consider $\mu^*,\lambda^*\in X^*$ and apply the uniform monotonicity of $G$ as follows
    \begin{align*}
        \|G^{-1}\mu^*-G^{-1}\lambda^*\|_X^2&\leq \langle GG^{-1}\mu^*-GG^{-1}\lambda^*, G^{-1}\mu^*-G^{-1}\lambda^*\rangle\\
        &\leq\| \mu^*-\lambda^*\|_{X^*}\|G^{-1}\mu^*-G^{-1}\lambda^*\|_X.
    \end{align*}
    Dividing by $\|G^{-1}\mu^*-G^{-1}\lambda^*\|_X$ shows that $G^{-1}$ is Lipschitz continuous on $X^*$.
    
 To show the final statement, let $U$ be a bounded, open, and convex subset such that $\eta\in U$ and consider the operator $K:X\to X$ defined by
 \begin{displaymath}
     K\mu=(1-s)\mu+s G_2^{-1}(\chi-G_1\mu).
 \end{displaymath}
 Since $G_2^{-1}$ is Lipschitz continuous and $G_1$ is Lipschitz continuous on $U$ we have that $G_2^{-1}(\chi-G_1\mu)$ and $K$ also are Lipschitz continuous on $U$. Moreover, since $\eta$ is a fixed point of both $K$ and $G_2^{-1}(\chi-G_1\mu)$ we can find a convex open set $W\subset U$ such that $\eta\in W$ and for $\mu\in W$ we have $K\mu\in U$ and $G_2^{-1}(\chi-G_1\mu)\in U$.

We now fix arbitrary $\mu,\lambda\in W$ and for $t\in[0, 1]$ we define the lines
\begin{align*}
    x(t)&=K\lambda+t(K\mu-K\lambda),\\
    y(t)&=G_2^{-1}(\chi-G_1\lambda) + t(G_2^{-1}(\chi-G_1\mu)-G_2^{-1}(\chi-G_1\lambda)),\quad\text{and}\\
    z(t)&=\lambda+t(\mu-\lambda).
\end{align*}
Since $U$ is convex, we have $x, y, z\subset U$. Thus, by~\cref{eq:ftc} we have
\begin{align}
    G_2K\mu-G_2K\lambda&=\int_0^1 G_2'\bigl(x(t)\bigr)(K\mu-K\lambda)\mathrm{d}t\label{eq:G2K}\\
    G_1\lambda-G_1\mu&=\int_0^1 G_2'\bigl(y(t)\bigr)(G_2^{-1}(\chi-G_1\mu)-G_2^{-1}(\chi-G_1\lambda))\mathrm{d}t\label{eq:G1}\\
    G_2\mu-G_2\lambda&=\int_0^1 G_2'\bigl(z(t)\bigr)(\mu-\lambda)\mathrm{d}t.\label{eq:G2}
\end{align}
Note that for the second identity we have used
\begin{displaymath}
G_1\lambda-G_1\mu=G_2G_2^{-1}(\chi-G_1\mu)-G_2G_2^{-1}(\chi-G_1\lambda).
\end{displaymath}
Using~\cref{eq:G2K,eq:intinner}, we get
\begin{align*}
&\langle G_2K\mu-G_2K\lambda, K\mu-K\lambda\rangle=\biggl\langle\int_0^1 G_2'\bigl(x(t)\bigr)(K\mu-K\lambda)\mathrm{d}t, K\mu-K\lambda\biggr\rangle\\
&\quad=\int_0^1\Bigl\langle G_2'\bigl(x(t)\bigr)(K\mu-K\lambda), K\mu-K\lambda\Bigr\rangle\mathrm{d}t\\
&\quad=\int_0^1\Bigl\langle G_2'\bigl(x(t)\bigr)\bigl((1-s)(\mu-\lambda)\\
&\qquad+sG_2^{-1}(\chi-G_1\mu)-sG_2^{-1}(\chi-G_1\lambda)\bigr), K\mu-K\lambda\Bigr\rangle\mathrm{d}t.
\end{align*}
Expanding $K\mu-K\lambda$ again, we find
\begin{align*}
&\langle G_2K\mu-G_2K\lambda, K\mu-K\lambda\rangle=(1-s)\int_0^1\Bigl\langle G_2'\bigl(x(t)\bigr)(\mu-\lambda), K\mu-K\lambda\Bigr\rangle\mathrm{d}t\\
&\qquad+s\int_0^1\Bigl\langle G_2'\bigl(x(t)\bigr)\bigl(G_2^{-1}(\chi-G_1\mu)-G_2^{-1}(\chi-G_1\lambda)\bigr), K\mu-K\lambda\Bigr\rangle\mathrm{d}t\\
&\quad=(1-s)^2\int_0^1\Bigl\langle G_2'\bigl(x(t)\bigr)(\mu-\lambda), \mu-\lambda\Bigr\rangle\mathrm{d}t\\
&\qquad+(1-s)s\int_0^1\Bigl\langle G_2'\bigl(x(t)\bigr)(\mu-\lambda), G_2^{-1}(\chi-G_1\mu)-G_2^{-1}(\chi-G_1\lambda)\Bigr\rangle\mathrm{d}t\\
&\qquad+s(1-s)\int_0^1\Bigl\langle G_2'\bigl(x(t)\bigr)\bigl(G_2^{-1}(\chi-G_1\mu)-G_2^{-1}(\chi-G_1\lambda)\bigr), \mu-\lambda\Bigr\rangle\mathrm{d}t\\
&\qquad+s^2\int_0^1\Bigl\langle G_2'\bigl(x(t)\bigr)\bigl(G_2^{-1}(\chi-G_1\mu)-G_2^{-1}(\chi-G_1\lambda)\bigr), \\
&\qquad\qquad\qquad G_2^{-1}(\chi-G_1\mu)-G_2^{-1}(\chi-G_1\lambda)\Bigr\rangle\mathrm{d}t.
\end{align*}
Using the symmetry of $G_2'(x(t))$ now yields
\begin{align*}
&\langle G_2K\mu-G_2K\lambda, K\mu-K\lambda\rangle=(1-s)^2\int_0^1\Bigl\langle G_2'\bigl(x(t)\bigr)(\mu-\lambda), \mu-\lambda\Bigr\rangle\mathrm{d}t\\
&\qquad+2s(1-s)\int_0^1\Bigl\langle G_2'\bigl(x(t)\bigr)\bigl(G_2^{-1}(\chi-G_1\mu)-G_2^{-1}(\chi-G_1\lambda)\bigr), \mu-\lambda\Bigr\rangle\mathrm{d}t\\
&\qquad+s^2\int_0^1\Bigl\langle G_2'\bigl(x(t)\bigr)\bigl(G_2^{-1}(\chi-G_1\mu)-G_2^{-1}(\chi-G_1\lambda)\bigr),\\
&\qquad\qquad\qquad G_2^{-1}(\chi-G_1\mu)-G_2^{-1}(\chi-G_1\lambda)\Bigl\rangle\mathrm{d}t\\
&\quad=(1-s)^2I_1+2s(1-s)I_2+s^2I_3.
\end{align*}
By~\cref{eq:G2}, the first term is
\begin{align*}
I_1&=\int_0^1\Bigl\langle G_2'\bigl(x(t)\bigr)(\mu-\lambda), \mu-\lambda\Bigr\rangle\mathrm{d}t\\
&=\int_0^1\Bigl\langle G_2'\bigl(z(t)\bigr)(\mu-\lambda), \mu-\lambda\Bigr\rangle\mathrm{d}t+\int_0^1\Bigl\langle \Bigl(G_2'\bigl(x(t)\bigr)-G_2'\bigl(z(t)\bigr)\Bigr)(\mu-\lambda), \mu-\lambda\Bigr\rangle\mathrm{d}t\\
&=\langle G_2\mu-G_2\lambda, \mu-\lambda\rangle+\int_0^1\Bigl\langle \Bigl(G_2'\bigl(x(t)\bigr)-G_2'\bigl(z(t)\bigr)\Bigr)(\mu-\lambda), \mu-\lambda\Bigr\rangle\mathrm{d}t
\end{align*}
and thus by the Lipschitz continuity of $G_2'$ on $U$ we have
\begin{align*}
I_1&\leq\langle G_2\mu-G_2\lambda, \mu-\lambda\rangle+C\|\mu-\lambda\|_X^2\int_0^1\|x(t)-z(t)\|_X\mathrm{d}t.
\end{align*}
Similarly, using~\cref{eq:G1}, the second term can be written
\begin{align*}
I_2&=\int_0^1\Bigl\langle G_2'\bigl(y(t)\bigr)\bigl(G_2^{-1}(\chi-G_1\mu)-G_2^{-1}(\chi-G_1\lambda)\bigr), \mu-\lambda\Bigr\rangle\mathrm{d}t\\
&\quad+\int_0^1\Bigl\langle \Bigl(G_2'\bigl(x(t)\bigr)-G_2'\bigl(y(t)\bigr)\Bigr)\bigl(G_2^{-1}(\chi-G_1\mu)-G_2^{-1}(\chi-G_1\lambda)\bigr), \mu-\lambda\Bigr\rangle\mathrm{d}t\\
&=-\langle G_1\mu-G_1\lambda, \mu-\lambda\rangle\\
&\quad+\int_0^1\Bigl\langle \bigl(G_2'\bigl(x(t)\bigr)-G_2'\bigl(y(t)\bigr)\bigr)\bigl(G_2^{-1}(\chi-G_1\mu)-G_2^{-1}(\chi-G_1\lambda)\bigr), \mu-\lambda\Bigr\rangle\mathrm{d}t.
\end{align*}
By the Lipschitz continuity of $G_1$ and $G_2'$ on $U$ and the Lipschitz continuity of $G_2^{-1}$ on $X^*$, this gives the estimate
\begin{align*}
I_2&\leq-\langle G_1\mu-G_1\lambda, \mu-\lambda\rangle+C\|\mu-\lambda\|_X^2\int_0^1\|x(t)-y(t)\|_X\mathrm{d}t.
\end{align*}
For the same reasons as above and the fact that $G_2'$ being Lipschitz on $U$ implies that it is bounded on $U$, we also have the estimate
\begin{align*}
I_3&\leq C\|\mu-\lambda\|_X^2.
\end{align*}
Putting this together gives
\begin{align*}
&\langle G_2K\mu-G_2K\lambda, K\mu-K\lambda\rangle\\
&\quad\leq (1-s)^2\langle G_2\mu-G_2\lambda, \mu-\lambda\rangle-2s(1-s)\langle G_1\mu-G_1\lambda, \mu-\lambda\rangle+s^2C\|\mu-\lambda\|_X^2\\
&\qquad+ C\|\mu-\lambda\|_X^2\int_0^1\bigl(\|x(t)-y(t)\|_X+\|x(t)-z(t)\|_X\bigr)\mathrm{d}t.
\end{align*}
Using the uniform monotonicity of $G_1$ and the Lipschitz continuity of $G_2$ on $U$ yields
\begin{align*}
    -\langle G_1\mu-G_1\lambda, \mu-\lambda\rangle\leq -c\|\mu-\lambda\|_X^2\leq -C\langle G_2\mu-G_2\lambda, \mu-\lambda\rangle.
\end{align*}
This, together with the uniform monotonicity of $G_2$, gives
\begin{align*}
&\langle G_2K\mu-G_2K\lambda, K\mu-K\lambda\rangle\\
&\quad\leq (1-s)^2\langle G_2\mu-G_2\lambda, \mu-\lambda\rangle-2s(1-s)c\langle G_2\mu-G_2\lambda, \mu-\lambda\rangle\\
&\qquad+s^2C\langle G_2\mu-G_2\lambda, \mu-\lambda\rangle\\
&\qquad+ C\langle G_2\mu-G_2\lambda, \mu-\lambda\rangle\int_0^1\bigl(\|x(t)-y(t)\|_X+\|x(t)-z(t)\|_X\bigr)\mathrm{d}t\\
&\quad\leq \biggl((1-s)^2-2s(1-s)c+s^2C\\
&\qquad +C\int_0^1\bigl(\|x(t)-y(t)\|_X+\|x(t)-z(t)\|_X\mathrm{d}t\bigr)\biggr)\langle G_2\mu-G_2\lambda, \mu-\lambda\rangle.
\end{align*}
By choosing $0<s<1$ small enough that
\begin{displaymath}
    L_1=(1-s)^2-2s(1-s)c+s^2C<1
\end{displaymath}
we have, for a fixed constant $C_1>0$,
\begin{align}\label{eq:prebound}
\begin{split}
    \langle G_2K\mu-G_2K\lambda, K\mu-K\lambda\rangle&\leq \biggl(L_1+C_1\int_0^1\|x(t)-y(t)\|_X\\
    &\quad+\|x(t)-z(t)\|_X\mathrm{d}t\biggr)\langle G_2\mu-G_2\lambda, \mu-\lambda\rangle.
\end{split}
\end{align}
Since $\mu$ and $\lambda$ were arbitrary and the constants are independent of $\mu$ and $\lambda$, this holds for all $\mu,\lambda\in W$. It remains to estimate the terms $\|x(t)-y(t)\|_X$ and $\|x(t)-z(t)\|_X$. To do this, first observe that
\begin{align*}
    \|x(t)-\eta\|_X^2&=\|t(K\mu-K\eta)+(1-t)(K\lambda-K\eta)\|_X^2\\
    &\leq C\bigl(\|K\mu-K\eta\|_X^2+\|K\lambda-K\eta)\|_X^2\bigr)\\
    &\leq C\bigl(\|\mu-\eta\|_X^2+\|\lambda-\eta)\|_X^2\bigr)\\
    &\leq C(\langle G_2\mu-G_2\eta, \mu-\eta\rangle+\langle G_2\lambda-G_2\eta, \lambda-\eta\rangle)
\end{align*}
for all $\mu,\lambda\in W$. This and similar observations for $y(t)$ and $z(t)$ yield the following inequalities
\begin{align}\label{eq:estimates}
\begin{split}
\|x(t)-\eta\|_X^2&\leq C(\langle G_2\mu-G_2\eta, \mu-\eta\rangle+\langle G_2\lambda-G_2\eta, \lambda-\eta\rangle)\\
\|y(t)-\eta\|_X^2&\leq C(\langle G_2\mu-G_2\eta, \mu-\eta\rangle+\langle G_2\lambda-G_2\eta, \lambda-\eta\rangle)\\
\|z(t)-\eta\|_X^2&\leq C(\langle G_2\mu-G_2\eta, \mu-\eta\rangle+\langle G_2\lambda-G_2\eta, \lambda-\eta\rangle)
\end{split}
\end{align}
for all $\mu,\lambda\in W$. Now, for some $r>0$ to be chosen later, let
\begin{displaymath}
    D_r(\eta)=\{\mu: \langle G_2\mu-G_2\eta, \mu-\eta\rangle<r^2\}.
\end{displaymath}
We assume that $r>0$ is small enough that $D_r(\eta) \subset W$. This, together with~\cref{eq:estimates}, implies that we have
\begin{displaymath}
    \|x(t)-y(t)\|_X\leq Cr\quad\text{and}\quad\|x(t)-z(t)\|_X\leq Cr,
\end{displaymath}
for all $\mu, \lambda\in D_r(\eta)$. This means that we can choose $r>0$ small enough that
\begin{displaymath}
    C_1(\|x(t)-y(t)\|_X+\|x(t)-z(t)\|_X)\leq \frac{1-L_1}{2}
\end{displaymath}
for all $\mu, \lambda\in D_r(\eta)$. Then, from~\cref{eq:prebound}, we finally obtain
\begin{align*}
\langle G_2K\mu-G_2K\lambda, K\mu-K\lambda\rangle \leq L\langle G_2\mu-G_2\lambda, \mu-\lambda\rangle
\end{align*}
with
\begin{displaymath}
    L=L_1+\frac{1-L_1}{2}=\frac{L_1+1}{2}<1.
\end{displaymath}
Thus if $\eta^{n+1}=K\eta^n$ with $\eta^n\in D_r(\eta)$ then
\begin{displaymath}
    \langle G_2\eta^{n+1}-G_2\eta, \eta^{n+1}-\eta\rangle \leq L\langle G_2\eta^n-G_2\eta, \eta^n-\eta\rangle,
\end{displaymath}
which implies that $\eta^{n+1}\in D_r(\eta)$. If we assume that $\eta^0\in D_r(\eta)$ and iterate the above inequality we get
\begin{align*}
c\|\eta^{n+1}-\eta\|_X^2&\leq\langle G_2\eta^{n+1}-G_2\eta, \eta^{n+1}-\eta\rangle \\
&\leq L^{n+1}\langle G_2\eta^0-G_2\eta, \eta^0-\eta\rangle\leq CL^{n+1}\|\eta^0-\eta\|_X^2.
\end{align*}
Note that, by the Lipschitz continuity of $G_2$, the condition $\eta^0\in D_r(\eta)$ is satisfied if $\eta^0$ is close enough to $\eta$ in $X$.
\end{proof}
 \section{Weak formulations of semilinear elliptic equations}\label{sec:weak}
 To state our weak formulations we require the following assumption.
\begin{assumption}\label{ass:dom}
    The sets $\Omega$ and $\Omega_i$ are bounded Lipschitz domains. The exterior boundary $\partial\Omega\setminus\partial\Omega_i$ and the interface $\Gamma$ are $(d-1)$-dimensional Lipschitz manifolds.
\end{assumption}
With~\cref{ass:dom} we can define the Sobolev spaces
\begin{align*}
    V&=H_0^1(\Omega),\qquad V_i^0=H_0^1(\Omega_i),\\
    V_i&=\{u\in H^1(\Omega_i):\, T_iu=0\},\\
    \Lambda &=\{\mu\in L^2(\Gamma):\, E_i\mu\in H^{1/2}(\partial\Omega_i)\},
\end{align*}
where $T_i:V_i\rightarrow\Lambda$ is the bounded and linear trace operator~\cite[Lemma 4.4]{eeeh22} and $E_i:L^2(\Gamma)\rightarrow L^2(\partial\Omega_i)$ is the extension by $0$. We also introduce the bounded linear extension operator $R_i: \Lambda\rightarrow V_i$ that is a right inverse to $T_i$, i.e., $T_iR_i\eta=\eta$ for all $\eta\in\Lambda$. We use the standard $H^1(\Omega_i)$-norm on $V_i$ and the denote the seminorm by $|u|_{V_i}=\|\nabla u\|_{L^2(\Omega_i)^d}$. On the space $\Lambda$ we use the norm $\|\mu\|_\Lambda=\|E_i\mu\|_{H^{1/2}(\partial\Omega_i)}$. For more details and a definition of $H^{1/2}(\partial\Omega_i)$ we refer to~\cite{eeeh22}. 

We make the following assumptions on the equation~\cref{eq:strong}.
 \begin{assumption}\label{ass:eq}
    Let
    \begin{displaymath}
        p^*=\frac{2d}{d-2},
    \end{displaymath}
    the Sobolev conjugate of $p=2$. If $d=2$ then $p^*$ can be chosen arbitrarily large. We make the following assumptions on $\alpha:\Omega\rightarrow\R$, $\beta:\Omega\times\R\rightarrow\R$, and $f$.
     \begin{itemize}
         \item The function $\alpha$ is in $L^\infty(\Omega)$.
         \item The function $\beta(\cdot, y)$ is measurable for all $y\in\R$ and $\beta(\cdot, 0)\in L^\infty(\Omega)$. Moreover, $\beta$ is differentiable with respect to $y$ with derivative $\beta_y$. The function $\beta_y(\cdot, y)$ is measurable for all $y\in\R$, $\beta_y(\cdot, 0)\in L^\infty(\Omega)$, and
         \begin{equation}\label{eq:betaylip}
             \bigl|\beta_y(x, y)-\beta_y(x, y')\bigr|\leq L_1(y, y')|y-y'|
         \end{equation}
         for all $y, y'\in\R$ and almost every $x\in\Omega$. The Lipschitz constant $L_1$ is assumed to satisfy
         \begin{displaymath}
            L_1(y, y')\leq C\bigl(1+|y|^{p^*-3}+|y'|^{p^*-3}\bigr),\qquad \text{for all }y, y'\in \R.
         \end{displaymath}
         \item We have the linear approximation bound
         \begin{equation}\label{eq:betaapp}
            \bigl|\beta\bigl(x, y\bigr)-\beta\bigl(x', y'\bigr)-\beta_y(x, y')(y-y')\bigr|\leq L_2(y, y')|y-y'|^2
         \end{equation}
         for all $y, y'\in\R$ and almost every $x\in\Omega$. The constant is assumed to satisfy
         \begin{equation}\label{eq:Lpbound}
            L_2(y, y')\leq C(1+|y|^{p^*-3}+|y'|^{p^*-3})\qquad \text{for all }y,y'\in \R.
         \end{equation}
         \item The function $\beta$ satisfies the following monotonicity condition
         \begin{equation}\label{eq:betamono}
             \bigl(\beta(x, y)-\beta(x, y')\bigr)\bigl(y-y'\bigr)\geq  -h(x)|y-y'|^2
         \end{equation}
         for all $y, y'\in\R$ and almost every $x\in\Omega$. Here the function $h$ satisfies
        \begin{equation}\label{eq:monbound}
            \inf_{x\in\Omega} \alpha(x) > C_p \sup_{x\in\Omega} h(x),
        \end{equation}
         where $C_p$ is the maximum of the Poincaré constants of $\Omega$ and $\Omega_i$.
         \item The function $\beta_y$ satisfies the coercivity condition
        \begin{equation}\label{eq:betaycoer}
            \beta_y(x, y)\geq -h(x)\qquad \text{for all }y\in \R \text{ and almost every }x\in\Omega.
        \end{equation}
         Here the function $h$ satisfies~\cref{eq:monbound}.
         \item $f\in V^*$ and there exist $f_i\in V_i^*$ such that
         \begin{displaymath}
            \langle f, v\rangle =\langle f_1, \restr{v}{\Omega_1}\rangle+\langle f_2, \restr{v}{\Omega_2}\rangle.
         \end{displaymath}
     \end{itemize}
 \end{assumption}
    \begin{example}\label{ex:ex1}
        Let $\alpha(x)=1$ and $\beta(x, y) = 10|y|^2y$. The derivative of $\beta$ is
        \begin{displaymath}
            \beta_y(x, y) = 30|y|^2,
        \end{displaymath}
        which gives that
        \begin{displaymath}
            |\beta_y(x, y)-\beta_y(x, y')|\leq 30(|y|+|y'|)|y-y'|
        \end{displaymath}
        and
        \begin{align*}
            &\bigl|\beta\bigl(x, y\bigr)-\beta\bigl(x', y'\bigr)-\beta_y(x, y')(y-y')\bigr|\\
            &\quad= 10|y+2y'||y-y'|^2\leq 20(|y|+|y'|)|y-y'|^2.
        \end{align*}
    \end{example}
    \begin{example}\label{ex:ex2}
        Let $\beta(x, y) = y$. If we replace $\alpha(x)$ in~\cref{eq:strong,eq:dn} with
        \begin{displaymath}
            \alpha(\nabla u)=|\nabla u|
        \end{displaymath}
        we get the $p$-Laplace equation with $p=3$, a degenerate quasilinear equation that does not satisfy~\cref{ass:eq}, but which will be used to demonstrate the generality of the nonlinear Dirichlet--Neumann method.
    \end{example}
    Let $A:V\rightarrow V^*$ and $A_i:V_i\rightarrow V_i^*$ be defined by
    \begin{align*}
        \langle Au, v\rangle&=\int_{\Omega}\alpha(x)\nabla u\cdot\nabla v+\beta(x, u) v \mathrm{d}x \quad\text{and}\\
        \langle A_iu_i, v_i\rangle&=\int_{\Omega_i}\alpha(x)\nabla u_i\cdot\nabla v_i+\beta(x, u_i) v_i \mathrm{d}x,
    \end{align*}
    respectively. The weak formulation of~\cref{eq:strong} is to find $u\in V$ such that
    \begin{equation}\label{eq:weak}
        \langle Au, v\rangle=\langle f, v\rangle\qquad \textrm{for all } v\in V.
    \end{equation}
    We also define the operator $A_i': V_i\rightarrow S(V_i, V_i^*)$
    \begin{align*}
        \langle A_i'(w_i)u_i, v_i\rangle&=\int_{\Omega_i}\alpha(x)\nabla u_i\cdot\nabla v_i+\beta_y(x, w_i) u_iv_i\mathrm{d}x.
    \end{align*}
    The notation $A_i'$ is justified by the following theorem.
    
    \begin{theorem}\label{thm:Ai}
        Suppose that~\cref{ass:dom,ass:eq} holds. The operator $A_i:V_i\rightarrow V_i^*$ is uniformly monotone and Lipschitz continuous on any bounded set $U\subset V_i$. A similar result holds for $A$. In particular, \cref{eq:weak} has a unique solution.
        
        For any $w\in V_i$, the operator $A_i'(w):V_i\rightarrow V_i^*$ is coercive, symmetric, and linear. Moreover, $A_i$ is Fréchet differentiable with Fréchet derivative $A_i': V_i\rightarrow S(V_i, V_i^*)$. The Fréchet derivative $A_i'$ is Lipschitz continuous on any bounded set $U\subset V_i$. 
    \end{theorem}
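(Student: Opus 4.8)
The plan is to verify each assertion by expanding the duality pairings into integrals over $\Omega_i$ (or $\Omega$) and estimating the terms involving $\beta$ and $\beta_y$ by means of the Sobolev embedding $H^1(\Omega_i)\hookrightarrow L^{p^*}(\Omega_i)$ together with Hölder's inequality; the growth exponents in \cref{ass:eq} are chosen precisely so that all the relevant Hölder exponents turn out to be conjugate. For $d=3$ this is the borderline embedding with $p^*=6$, while for $d=2$ it holds for the arbitrarily large $p^*$ fixed in \cref{ass:eq}; in either case $p^*-2>0$ and $p^*-3\geq0$. Differences $u_i-v_i$ lie in $V_i$ (and $u-v$ in $V$), so Poincaré's inequality applies and the seminorm $|\cdot|_{V_i}$ is equivalent to the $H^1(\Omega_i)$-norm on $V_i$.

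First I would establish the uniform monotonicity of $A_i$. In $\langle A_iu_i-A_iv_i,u_i-v_i\rangle$ the gradient contribution is bounded below by $(\inf_\Omega\alpha)\,|u_i-v_i|_{V_i}^2$, while the zero-order contribution is controlled from below using \cref{eq:betamono} and Poincaré's inequality; \cref{eq:monbound} then makes the resulting constant strictly positive, and norm equivalence on $V_i$ gives $\langle A_iu_i-A_iv_i,u_i-v_i\rangle\geq c\|u_i-v_i\|_{V_i}^2$. Coercivity of $A_i'(w_i)$ is obtained in exactly the same way, with \cref{eq:betaycoer} replacing \cref{eq:betamono}, and its linearity, symmetry, and boundedness follow from the definition together with a Hölder estimate of the type used below. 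For Lipschitz continuity of $A_i$ on a bounded set $U$, I would use the mean value theorem in the $y$-variable to write $\beta(x,u_i)-\beta(x,v_i)=\beta_y(x,\xi)(u_i-v_i)$ with $\xi$ between $u_i$ and $v_i$, bound $|\beta_y(x,\xi)|\leq C\bigl(1+|u_i|+|v_i|+|u_i|^{p^*-2}+|v_i|^{p^*-2}\bigr)$ using $\beta_y(\cdot,0)\in L^\infty(\Omega)$ and the growth bound following from \cref{eq:betaylip}, and then apply Hölder's inequality with exponents $p^*/(p^*-2),p^*,p^*$ to the worst term $\int_{\Omega_i}|u_i|^{p^*-2}|u_i-v_i||v|\,\dx$; the Sobolev embedding and boundedness of $U$ then give $|\langle A_iu_i-A_iv_i,v\rangle|\leq C(U)\|u_i-v_i\|_{V_i}\|v\|_{V_i}$. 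The identical argument applies to $A$, and the first bullet of \cref{thm:abstract} applied with $G=A$ then shows that $A$ is bijective, which yields the unique solvability of \cref{eq:weak}.

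For the Fréchet differentiability of $A_i$ at a point $w_i\in V_i$ I would estimate
\begin{displaymath}
\bigl\langle A_i(w_i+h_i)-A_iw_i-A_i'(w_i)h_i,v\bigr\rangle=\int_{\Omega_i}\bigl(\beta(x,w_i+h_i)-\beta(x,w_i)-\beta_y(x,w_i)h_i\bigr)v\,\dx,
\end{displaymath}
using the linear approximation bound~\cref{eq:betaapp} together with~\cref{eq:Lpbound} to bound the integrand by $C\bigl(1+|w_i+h_i|^{p^*-3}+|w_i|^{p^*-3}\bigr)|h_i|^2|v|$. Hölder's inequality with exponents $p^*/(p^*-3),p^*/2,p^*$ and the Sobolev embedding then give $\|A_i(w_i+h_i)-A_iw_i-A_i'(w_i)h_i\|_{V_i^*}\leq C\bigl(1+\|w_i\|_{V_i}^{p^*-3}+\|w_i+h_i\|_{V_i}^{p^*-3}\bigr)\|h_i\|_{V_i}^2$, which is $o(\|h_i\|_{V_i})$ as $h_i\to0$, so $A_i'$ is indeed the Fréchet derivative. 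Lipschitz continuity of $A_i'$ on a bounded set $U$ follows in the same manner from \cref{eq:betaylip}, which gives $|\beta_y(x,w_i)-\beta_y(x,\tilde w_i)|\leq C\bigl(1+|w_i|^{p^*-3}+|\tilde w_i|^{p^*-3}\bigr)|w_i-\tilde w_i|$; applying Hölder with exponents $p^*/(p^*-3),p^*,p^*,p^*$ to $\int_{\Omega_i}\bigl(1+|w_i|^{p^*-3}+|\tilde w_i|^{p^*-3}\bigr)|w_i-\tilde w_i||u_i||v|\,\dx$ then gives $\|A_i'(w_i)-A_i'(\tilde w_i)\|_{B(V_i,V_i^*)}\leq C(U)\|w_i-\tilde w_i\|_{V_i}$. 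The main obstacle is purely technical: keeping track of the Hölder exponents and checking that the growth conditions in \cref{ass:eq} are exactly strong enough — in three dimensions the embedding $H^1\hookrightarrow L^6$ leaves no room to spare, which is precisely why the exponent $p^*-3$ appears in \cref{eq:betaylip,eq:betaapp,eq:Lpbound}.
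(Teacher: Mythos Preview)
Your proposal is correct and follows essentially the same route as the paper: Sobolev embedding $V_i\hookrightarrow L^{p^*}$ combined with Hölder's inequality at the exponents $p^*/(p^*-2),p^*,p^*$ (for boundedness and Lipschitz of $A_i$) and $p^*/(p^*-3),p^*/2,p^*$ (for Fréchet differentiability and Lipschitz of $A_i'$), with monotonicity/coercivity handled via \cref{eq:betamono,eq:betaycoer,eq:monbound} and Poincaré. The only cosmetic differences are that the paper cites an earlier paper for the monotonicity and Lipschitz continuity of $A_i$ rather than writing it out, and uses the integral form $\beta(x,y)-\beta(x,y')=\int_{y'}^{y}\beta_y(x,t)\,\mathrm{d}t$ instead of the pointwise mean value theorem, which is marginally cleaner since it avoids any question about the measurability of the intermediate point $\xi(x)$.
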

    \begin{proof}
        First recall the inequality,
        \begin{displaymath}
            (x+y)^r\leq \max(1, 2^{r-1})(x^r+y^r),\quad r,x,y\geq 0,
        \end{displaymath}
        which follows from the convexity of the function $x\mapsto x^r$ when $r\geq1$ and the subadditivity of the same function when $r<1$. This inequality will be used many times throughout the proof without explicit mention. We note that~\cref{eq:betaylip} and $\beta_y(\cdot, 0)\in L^\infty(\Omega)$ implies that
         \begin{equation}\label{eq:betaybound}
             \bigl|\beta_y(x, y)\bigr|\leq\bigl|\beta_y(x, y)-\beta_y(x, 0)\bigr|+\bigl|\beta_y(x, 0)\bigr|\leq L_0(y)
         \end{equation}
         for all $y, y'\in\R$ and almost every $x\in\Omega$. Here $L_0$ satisfies
         \begin{equation}\label{eq:L0bound}
            L_0(y)\leq C\bigl(1+|y|^{p^*-2}\bigr)\qquad \text{for all }y\in \R.
         \end{equation}
         In turn,~\cref{eq:betaybound} implies that
        \begin{displaymath}
            |\beta(x, y)-\beta(x, y')|=\Bigl|\int_{y'}^y\beta_y(x, t)\mathrm{d}t\Bigr|\leq C(1+|y|^{p^*-2}+|y'|^{p^*-2})|y-y'|
        \end{displaymath}
        for all $y, y'\in\R$ and almost every $x\in\Omega$. With this observation and the monotonicity~\cref{eq:betamono}, the fact that $A_i$ and $A$ are uniformly monotone and Lipschitz continuous and that~\cref{eq:weak} has a unique solution follows as in~\cite[Lemma 1]{eeeh24apnum}.

        We now let $w\in V_i$ and consider $A_i'(w)$. We first show that $A_i'(w)\in S(V_i, V_i^*)$. The symmetry is obvious from the definition. To show boundedness let $u,v\in V_i$, recall the Sobolev conjugate $p^*$ (of $p=2$), and let
        \begin{displaymath}
            q'=\frac{p^*}{p^*-2}.
        \end{displaymath}
        Since we have the Sobolev embedding $V_i\hookrightarrow L^{p*}(\Omega_i)$, it follows that $u,v,w\in L^{p*}(\Omega_i)$ and
        \begin{displaymath}
            \|u\|_{L^{p^*}(\Omega_i)}\leq C\|u\|_{V_i},
        \end{displaymath}
        with similar bounds for $v$ and $w$. Moreover, by~\cref{eq:L0bound} we have
        \begin{displaymath}
            |L_0(w)|^{q'}\leq C\bigl(1+|w|^{p^*}\bigr)
        \end{displaymath}
        and therefore $L_0(w)\in L^{q'}(\Omega_i)$ with
        \begin{displaymath}
            \|L_0(w)\|_{L^{q'}(\Omega_i)}\leq C\bigl(1+\|w\|_{L^{p^*}(\Omega_i)}^{p^*-2}\bigr).
        \end{displaymath}
        Using the Hölder inequality twice, first with $q'$ and its Hölder conjugate $p^*/2$ and then with $2$ and its Hölder conjugate $2$, yields
        \begin{align*}
            &\int_{\Omega_i}L_0(w)|u||v| \mathrm{d}x\leq \|L_0(w)\|_{L^{q'}(\Omega_i)}\biggl(\int_{\Omega_i}|u|^{p^*/2}|v| ^{p^*/2}\mathrm{d}x\biggr)^{2/p^*}\\
            &\quad\leq \|L_0(w)\|_{L^{q'}(\Omega_i)}\|u\|_{L^{p^*}(\Omega_i)}\|v\|_{L^{p^*}(\Omega_i)}.
        \end{align*}
        With these observations, we have from~\cref{eq:betaybound} that
        \begin{align*}
            &\bigl|\langle A_i'(w)u, v\rangle\bigr|=\biggl|\int_{\Omega_i}\alpha(x)\nabla u\cdot \nabla v+\beta_y(x, w)uv \mathrm{d}x\biggr|\\
            &\quad\leq \|\alpha\|_{L^\infty(\Omega_i)}\|u\|_{V_i}\|v\|_{V_i}+\|L_0(w)\|_{L^{q'}(\Omega_i)}\|u\|_{L^{p*}(\Omega_i)}\|v\|_{L^{p*}(\Omega_i)}\\
            &\quad\leq \|\alpha\|_{L^\infty(\Omega_i)}\|u\|_{V_i}\|v\|_{V_i}+C(1+\|w\|_{L^{p^*}(\Omega_i)})^{p^*-2}\|u\|_{L^{p*}(\Omega_i)}\|v\|_{L^{p*}(\Omega_i)}\\
            &\quad\leq \|\alpha\|_{L^\infty(\Omega_i)}\|u\|_{V_i}\|v\|_{V_i}+C(1+\|w\|_{V_i})^{p^*-2}\|u\|_{V_i}\|v\|_{V_i}.
        \end{align*}
        Thus we have shown that $A_i'(w)$ is bounded. The coercivity of $A_i'(w)$ follows from~\cref{eq:monbound,eq:betaycoer} since
        \begin{align*}
            \langle A_i'(w)u, u\rangle &=\int_{\Omega_i}\alpha(x)|\nabla u|^2+\beta_y(x, w)|u|^2 \mathrm{d}x\geq\inf_{x\in\Omega_i} \alpha(x)|u|_{V_i}^2-\sup_{x\in\Omega_i}h(x)\|u\|_{L^2(\Omega_i)}^2\\
            &\geq c\|u\|_{V_i}^2.
        \end{align*}
        
        We now show that $A'_i$ is the Fréchet derivative of $A_i$. Let $u,v,h\in V_i$ such that $\|h\|_{V_i}\rightarrow 0$. The argument is similar as above, but we let
        \begin{displaymath}
            q=\frac{p^*}{p^*-3}.
        \end{displaymath}
        It follows that $u,v,h\in L^{p*}(\Omega_i)$ with appropriate bounds. Moreover, by~\cref{eq:Lpbound} we have
        \begin{displaymath}
            |L_2(u+h, u)|^q\leq C\bigl(1+|u+h|^{p^*}+|u|^{p^*}\bigr)
        \end{displaymath}
        and therefore $L_2(u+h, u)\in L^q(\Omega_i)$ with
        \begin{displaymath}
            \|L_2(u+h, u)\|_{L^q(\Omega_i)}\leq C\bigl(1+\|u+h\|_{L^{p^*}(\Omega_i)}^{p^*-3}+\|u\|_{L^{p^*}(\Omega_i)}^{p^*-3}\bigr).
        \end{displaymath}
        
        Using the Hölder inequality twice again, this time first with $q$ and its Hölder conjugate $p^*/3$ and then with $3$ and its Hölder conjugate $3/2$, yields
        \begin{align*}
            &\int_{\Omega_i}L_2(u+h, u)|h|^2|v| \mathrm{d}x\leq \|L_2(u+h, u)\|_{L^q(\Omega_i)}\biggl(\int_{\Omega_i}|h|^{2p^*/3}|v| ^{p^*/3}\mathrm{d}x\biggr)^{3/p^*}\\
            &\quad\leq \|L_2(u+h, u)\|_{L^q(\Omega_i)}\|v\|_{L^{p^*}(\Omega_i)}\biggl(\int_{\Omega_i}|h| ^{p^*}\mathrm{d}x\biggr)^{2/p^*}\\
            &\quad\leq \|L_2(u+h, u)\|_{L^q(\Omega_i)}\|v\|_{L^{p^*}(\Omega_i)}\|h\|_{L^{p^*}(\Omega_i)}^2.
        \end{align*}
        These observations together with~\cref{eq:betaapp} gives the estimate
        \begin{align*}
            &\|A_i(u+h)-A_i(u)-A_i'(u)h\|_{V_i^*}=\sup_{\|v\|_{V_i}=1}\bigl|\langle A_i(u+h)-A_i(u)-A_i'(u)h, v\rangle\bigr|\\
            &\quad= \sup_{\|v\|_{V_i}=1}\biggl|\int_{\Omega_i}\bigl(\beta(x, u+h)-\beta(x, u)-\beta_y(x, u)h\bigr) v  \mathrm{d}x\biggr|\\
            &\quad\leq \sup_{\|v\|_{V_i}=1}\int_{\Omega_i}L_2(u+h, u)|h|^2|v| \mathrm{d}x\\
            &\quad\leq \sup_{\|v\|_{V_i}=1}\|L_2(u+h, u)\|_{L^q(\Omega_i)}\|h\|_{L^{p^*}(\Omega_i)}^2\|v\|_{L^{p^*}(\Omega_i)}\\
            &\quad\leq \sup_{\|v\|_{V_i}=1}C\bigl(1+\|u+h\|_{L^{p^*}(\Omega_i)}^{p^*-3}+\|u\|_{L^{p^*}(\Omega_i)}^{p^*-3}\bigr)\|h\|_{L^{p^*}(\Omega_i)}^2\|v\|_{L^{p^*}(\Omega_i)}\\
            &\quad\leq \sup_{\|v\|_{V_i}=1}C\bigl(1+\|u\|_{L^{p^*}(\Omega_i)}^{p^*-3}+\|h\|_{L^{p^*}(\Omega_i)}^{p^*-3}\bigr)\|h\|_{L^{p^*}(\Omega_i)}^2\|v\|_{L^{p^*}(\Omega_i)}\\
            &\quad\leq \sup_{\|v\|_{V_i}=1}C(1+\|u\|_{V_i}^{p^*-3}+\|h\|_{V_i}^{p^*-3}\bigr)\|h\|_{V_i}^2\|v\|_{V_i}\\
            &\quad\leq C(1+\|u\|_{V_i}^{p^*-3}+\|h\|_{V_i}^{p^*-3}\bigr)\|h\|_{V_i}^2.
        \end{align*}
        Dividing by $\|h\|_{V_i}$ and letting $h\rightarrow 0$ yields the appropriate limit. It remains to show that $A'_i$ is Lipschitz continuous. Let $w, w'\in V_i$. Arguing similarly as above using~\cref{eq:betaylip} shows that
        \begin{align*}
            &\bigl|\bigl\langle \bigl(A'_i(w)-A'_i(w')\bigr)u, v\bigr\rangle\bigr|\\
            &\quad=\biggl|\int_{\Omega_i}\bigl(\beta_y(x, w)-\beta_y(x, w')\bigr)u v  \mathrm{d}x\biggr|\\
            &\quad\leq\int_{\Omega_i}L_1(w, w')|w-w'||u||v| \mathrm{d}x\\
            &\quad\leq C\|L_1(w, w')\|_{L^q(\Omega_i)}\|w-w'\|_{L^{p^*}(\Omega_i)}\|u\|_{L^{p^*}}\|v\|_{L^{p^*}(\Omega_i)}\\
            &\quad\leq C(1+\|w\|_{L^{p^*}(\Omega_i)}+\|w'\|_{L^{p^*}(\Omega_i)})^{p^*-3}\|w-w'\|_{L^{p^*}(\Omega_i)}\|u\|_{L^{p^*}(\Omega_i)}\|v\|_{L^{p^*}(\Omega_i)}\\
            &\quad\leq C(1+\|w\|_{V_i}+\|w'\|_{V_i})^{p^*-3}\|w-w'\|_{V_i}\|u\|_{V_i}\|v\|_{V_i}
        \end{align*}
        for all $u, v\in V_i$. This gives
        \begin{align*}
            &\|A'_i(w)-A'_i(w')\|_{B(V_i, V_i^*)}=\sup_{\|u\|_{V_i}=1}\bigl\|\bigl(A'_i(w)-A'_i(w')\bigr)u\bigr\|_{V_i^*}\\
            &\quad=\sup_{\|u\|_{V_i}=\|v\|_{V_i}=1}\bigl|\bigl\langle \bigl(A'_i(w)-A'_i(w')\bigr)u, v\bigr\rangle\bigr|\\
            &\quad\leq \sup_{\|u\|_{V_i}=\|v\|_{V_i}=1}C(1+\|w\|_{V_i}+\|w'\|_{V_i})^{p^*-3}\|w-w'\|_{V_i}\|u\|_{V_i}\|v\|_{V_i}\\
            &\quad= C(1+\|w\|_{V_i}+\|w'\|_{V_i})^{p^*-3}\|w-w'\|_{V_i},
        \end{align*}
        which shows that $A_i'$ is Lipschitz continuous on any bounded set.

    \end{proof}
    
    \section{Transmission problem and Steklov--Poincaré operators}\label{sec:ddm}
    The transmission problem is to find $(u_1, u_2)\in V_1\times V_2$ such that
    \begin{equation}\label{eq:weaktran}
    	\left\{\begin{aligned}
    	     \langle A_iu_i, v_i\rangle&=\langle f_i, v_i\rangle & & \text{for all } v_i\in V_i^0,\, i=1,2,\\
    	     T_1u_1&=T_2u_2, & &\\
    	     \textstyle\sum_{i=1}^2 \langle A_i& u_i, R_i\mu\rangle-\langle f_i, R_i\mu\rangle=0 & &\text{for all }\mu\in \Lambda. 
    	\end{aligned}\right.
    \end{equation}
    The transmission problem is equivalent to the weak equation~\cref{eq:weak}, see, e.g.,~\cite[Theorem 5.2]{eeeh22} for a proof.

    Before introducing the Steklov--Poincaré operators we must define the solution operators. For $\eta\in\Lambda$ let $u_i=F_i\eta$ denote the solution to the problem
    \begin{equation}\label{eq:Fi}
    	\left\{
    	\begin{aligned}
    		\langle A_iu_i-f_i, v\rangle &= 0 \qquad&\text{for all } v\in V_i^0,\\
    		T_iu_i&=\eta. &
    	\end{aligned}
    	\right.
    \end{equation}
    Similarly, for $\eta,\nu\in\Lambda$ let $u_i=F'_i(\nu)\eta$ denote the solution to the problem
    \begin{equation}\label{eq:hatFi}
    	\left\{
    	\begin{aligned}
    		\langle A'_i(F_i\nu)u_i, v\rangle &= 0 \qquad&\text{for all } v\in V_i^0,\\
    		T_iu_i&=\eta. &
    	\end{aligned}
    	\right.
    \end{equation}
    It has been proven that both of these operators exists under weaker assumptions, see~\cite[Lemma 6.1]{eeeh22}. The notation $F'$ is justified by the following theorem.
    \begin{theorem}\label{thm:Fi}
        Suppose that~\cref{ass:eq,ass:dom} hold. The operators $F_i:\Lambda\rightarrow V_i$ are Lipschitz continuous on any bounded subset $U\subset \Lambda$. Moreover, they are Fréchet differentiable with Fréchet derivative $F'_i:\Lambda\rightarrow B(\Lambda, V_i)$ defined through~\cref{eq:hatFi}. The operators $F'_i$ are Lipschitz continuous on any bounded subset $U\subset \Lambda$.
    \end{theorem}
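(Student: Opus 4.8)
The plan is to run, for each of the three assertions, the classical energy/stability argument for solution operators of monotone elliptic problems, relying throughout on the properties established in \cref{thm:Ai}: $A_i$ is uniformly monotone and Lipschitz continuous on bounded sets (hence maps bounded sets to bounded sets), the linearization $A_i'(w)$ is coercive with a coercivity constant \emph{independent} of $w$ and bounded on bounded sets, and the nonlinear remainder of $A_i$ obeys, denoting the increment by $k$, the superlinear bound $\|A_i(u+k)-A_iu-A_i'(u)k\|_{V_i^*}\le C(1+\|u\|_{V_i}^{p^*-3}+\|k\|_{V_i}^{p^*-3})\|k\|_{V_i}^2$. Existence and uniqueness of $F_i\eta$ and of $F_i'(\nu)\eta$ are already available from~\cite[Lemma 6.1]{eeeh22}. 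The recurring technical device is the bounded linear right inverse $R_i:\Lambda\to V_i$ of $T_i$: lifting a boundary increment by $R_i$ turns the difference of two solutions with equal trace into an element of $V_i^0$, which may then be used as a test function.

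I would first record that $F_i$ maps bounded sets to bounded sets: for $\eta$ in a bounded set test the equation for $u=F_i\eta$ with $u-R_i\eta\in V_i^0$, apply uniform monotonicity of $A_i$ (comparing with $A_i(R_i\eta)$), and use that $\|A_i(R_i\eta)\|_{V_i^*}$ stays bounded since $A_i$ is bounded on bounded sets and $\|R_i\eta\|_{V_i}\le C\|\eta\|_\Lambda$. Lipschitz continuity of $F_i$ on a bounded set $U$ then follows by subtracting the equations for $u=F_i\eta$ and $w=F_i\xi$, testing with $u-w-R_i(\eta-\xi)\in V_i^0$, and estimating $c\|u-w\|_{V_i}^2\le\langle A_iu-A_iw,u-w\rangle=\langle A_iu-A_iw,R_i(\eta-\xi)\rangle\le C\|u-w\|_{V_i}\|\eta-\xi\|_\Lambda$, the constant being the Lipschitz constant of $A_i$ on the bounded set containing $F_i(U)$. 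Running the same scheme with $A_i'(F_i\nu)$ in place of $A_i$, and using its uniform coercivity together with its boundedness on bounded sets, shows that the problem~\cref{eq:hatFi} indeed defines a linear operator $F_i'(\nu)\in B(\Lambda,V_i)$ whose norm is bounded uniformly for $\nu$ in a bounded set.

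For Fréchet differentiability, fix $\eta$ and set $u=F_i\eta$, $u_h=F_i(\eta+h)$, $\phi=F_i'(\eta)h$ and $r=u_h-u-\phi$; since the trace conditions are linear in the boundary data, $T_ir=0$, so $r\in V_i^0$. Subtracting the weak equations for $u_h$, $u$ and $\phi$ and writing $A_iu_h-A_iu-A_i'(u)\phi=\bigl(A_iu_h-A_iu-A_i'(u)(u_h-u)\bigr)+A_i'(u)r$, testing with $r$, and using coercivity of $A_i'(u)$ gives $c\|r\|_{V_i}^2\le\|A_iu_h-A_iu-A_i'(u)(u_h-u)\|_{V_i^*}\|r\|_{V_i}$. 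Applying the remainder bound of \cref{thm:Ai} with $k=u_h-u$ and invoking the already-proven Lipschitz continuity of $F_i$ (which gives $\|u_h-u\|_{V_i}\le C\|h\|_\Lambda\to 0$, so the prefactor $1+\|u\|_{V_i}^{p^*-3}+\|u_h-u\|_{V_i}^{p^*-3}$ remains bounded) yields $\|r\|_{V_i}\le C\|u_h-u\|_{V_i}^2\le C\|h\|_\Lambda^2=o(\|h\|_\Lambda)$, which is exactly the Fréchet property. Finally, for Lipschitz continuity of $F_i'$ on a bounded $U$: given $\nu,\nu'\in U$ and $h\in\Lambda$, set $\phi=F_i'(\nu)h$, $\phi'=F_i'(\nu')h$, $\psi=\phi-\phi'\in V_i^0$, subtract the linearized equations, write $A_i'(F_i\nu)\phi-A_i'(F_i\nu')\phi'=A_i'(F_i\nu)\psi+(A_i'(F_i\nu)-A_i'(F_i\nu'))\phi'$, test with $\psi$, and use coercivity of $A_i'(F_i\nu)$, the Lipschitz continuity of $A_i'$ combined with that of $F_i$ (so that $\|A_i'(F_i\nu)-A_i'(F_i\nu')\|_{B(V_i,V_i^*)}\le C\|\nu-\nu'\|_\Lambda$), and the uniform estimate $\|\phi'\|_{V_i}\le C\|h\|_\Lambda$; this gives $\|\psi\|_{V_i}\le C\|\nu-\nu'\|_\Lambda\|h\|_\Lambda$, and taking the supremum over $\|h\|_\Lambda\le 1$ concludes.

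I expect the differentiability step to be the main obstacle: producing a remainder $r$ that is genuinely $o(\|h\|_\Lambda)$ hinges on the interplay of two ingredients — the superlinear (quadratic, up to a locally bounded prefactor) bound on the nonlinear remainder of $A_i$ from \cref{thm:Ai}, and the Lipschitz continuity of $F_i$, which converts the $V_i$-smallness of $u_h-u$ into the $\Lambda$-smallness of $h$ — rather than on any single estimate, and one must check that the exponent $p^*-3$ does not ruin boundedness of the prefactor, which it does not precisely because $u_h-u$ is small. The remaining arguments are the standard Lax--Milgram/monotonicity energy estimates, so the difficulty is concentrated there.
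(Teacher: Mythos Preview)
Your proposal is correct and follows essentially the same approach as the paper: the energy estimates based on testing with the $V_i^0$-difference, the decomposition $A_iu_h-A_iu-A_i'(u)\phi=(A_iu_h-A_iu-A_i'(u)(u_h-u))+A_i'(u)r$ for the differentiability step, and the splitting $A_i'(F_i\nu)\psi+(A_i'(F_i\nu)-A_i'(F_i\nu'))\phi'$ for the Lipschitz continuity of $F_i'$ all match the paper's argument. The only cosmetic differences are that the paper cites the Lipschitz continuity of $F_i$ from~\cite{eeeh24apnum} rather than reproving it, and in the differentiability step invokes Fr\'echet differentiability of $A_i$ abstractly (i.e., $\|A_i(u+k)-A_iu-A_i'(u)k\|_{V_i^*}/\|k\|_{V_i}\to 0$) instead of the explicit quadratic remainder bound you quote from \cref{thm:Ai}; these are equivalent here.
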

    \begin{proof}
        For the proof of fact that $F_i$ is Lipschitz continuous we refer to~\cite[Lemma 2]{eeeh24apnum}.

        We first show that $F'_i$ is bounded, i.e., that it maps bounded sets to bounded sets. Let $U$ be a bounded subset $U\subset \Lambda$ . Since $F_i$ is Lipschitz on $U$ it is also bounded on $U$, which means that $F_i(U)$ is a bounded set. Also, for the same reason, $A_i'$ is bounded on $F_i(U)$. It follows that
        \begin{displaymath}
            \|A_i'(F_i\nu)F_i'(\nu)\eta\|_{V_i^*}\leq C\|F_i'(\nu)\eta\|_{V_i}\quad\text{for all }\nu\in U, \eta\in\Lambda.
        \end{displaymath}
        Thus, by the boundedness of $R_i$, the coercivity of $A_i'(F_i\nu)$, and the fact that $F_i'(\nu)\eta-R_i\eta\in V_i^0$, we have
        \begin{align*}
            c\|F_i'(\nu)\eta\|_{V_i}^2&\leq\langle A_i'(F_i\nu)F_i'(\nu)\eta, F_i'(\nu)\eta\rangle\\
            &=\langle A_i'(F_i\nu)F_i'(\nu)\eta, R_i\eta\rangle+\langle A_i'(F_i\nu)F_i'(\nu)\eta, F_i'(\nu)\eta-R_i\eta\rangle\\
            &=\langle A_i'(F_i\nu)F_i'(\nu)\eta, R_i\eta\rangle\\
            &\leq \|A_i'(F_i\nu)F_i'(\nu)\eta\|_{V_i^*}\|R_i\eta\|_{V_i}\\
            &\leq C\|F_i'(\nu)\eta\|_{V_i}\|\eta\|_{\Lambda}
        \end{align*}
        for all $\nu\in U, \eta\in\Lambda$.
        It follows that
        \begin{displaymath}
            \|F_i'(\nu)\|_{B(\Lambda, V_i)}\leq C
        \end{displaymath}
        for all $\nu\in U$. Thus we have shown that $F_i'$ is bounded.

        We now verify that $F_i'$ is the Fréchet derivative of $F_i$. Let $\nu, h\in \Lambda$ and introduce $u=F_i\nu$, $k=F_i(\nu+h)-F_i\nu$, and $w=F_i(\nu+h)-F_i\nu-F_i'(\nu)h$. Note, in particular, that $w\in V_i^0$ so that
        \begin{displaymath}
            \Bigl\langle A_i(u+k)-A_i(u), w\Bigr\rangle=0\quad\text{and}\quad\Bigl\langle A'_i(F_i\nu)F_i'(\nu)h, w\Bigr\rangle=0.
        \end{displaymath}
        Therefore, by the coercivity of $A_i'(F_i\nu)$,
        \begin{align*}
            &\|F_i(\nu+h)-F_i\nu-F_i'(\nu)h\|_{V_i}^2\leq\Bigl\langle A'_i(F_i\nu)\bigl(F_i(\nu+h)-F_i\nu-F_i'(\nu)h\bigr), w\Bigr\rangle\\
            &\quad=\Bigl\langle A'_i(F_i\nu)\bigl(F_i(\nu+h)-F_i\nu\bigr), w\Bigr\rangle=\langle A'_i(u)k, w\rangle\\
            &\quad=-\langle A_i(u+k)-A_i(u)-A'_i(u)k, w\rangle\\
            &\quad\leq\|A_i(u+k)-A_i(u)-A'_i(u)k\|_{V_i^*}\|w\|_{V_i}\\
            &\quad=\frac{\|A_i(u+k)-A_i(u)-A'_i(u)k\|_{V_i^*}}{\|k\|_{V_i}}\|k\|_{V_i}\|F_i(\nu+h)-F_i\nu-F_i'(\nu)h\|_{V_i}.
        \end{align*}
        We divide by $\|h\|_{\Lambda}\|F_i(\nu+h)-F_i\nu-F_i'(\nu)h\|_{V_i}$ and let $\|h\|_{\Lambda}\rightarrow 0$. Since $F_i$ is Lipschitz continuous in a neighbourhood around $\nu$ this implies that $\|k\|_{V_i}\rightarrow 0$ and that $\|k\|_{V_i}/\|h\|_{\Lambda}$ is bounded. Since $A_i$ is Fréchet differentiable, we get
        \begin{align*}
            \frac{\|F_i(\nu+h)-F_i\nu-F_i'(\nu)h\|_{V_i}}{\|h\|_{\Lambda}}\leq \frac{\|A_i(u+k)-A_i(u)-A'_i(u)k\|_{V_i^*}}{\|k\|_{V_i}}\frac{\|k\|_{V_i}}{\|h\|_{\Lambda}}\rightarrow 0
        \end{align*}
        as $\|h\|_{\Lambda}\rightarrow 0$. We now show that $F_i'$ is Lipschitz continuous on any bounded set. To show this we first note that $F'_i(\nu)\eta-F'_i(\lambda)\eta\in V_i^0$, which yields the identity
        \begin{align*}
            &\Bigl\langle A'_i(F_i\nu)\bigl(F'_i(\nu)-F'_i(\lambda)\bigr)\eta, F'_i(\nu)\eta-F'_i(\lambda)\eta\Bigr\rangle\\
            &\quad=\Bigl\langle 0-A'_i(F_i\nu)F'_i(\lambda)\eta, F'_i(\nu)\eta-F'_i(\lambda)\eta\Bigr\rangle\\
            &\quad=\Bigl\langle \Bigl(A'_i(F_i\lambda)-A'_i(F_i\nu)\Bigr)F'_i(\lambda)\eta, F'_i(\nu)\eta-F'_i(\lambda)\eta\Bigr\rangle
        \end{align*}
        for all $\nu,\lambda\in\Lambda$. Now consider any bounded subset $U\subset\Lambda$. Then $F_i$ is Lipschitz continuous on $U$. In particular, $F_i(U)$ is a bounded set. Therefore, $A_i'$ is Lipschitz continuous on $F_i(U)$. The Lipschitz continuity now follows from these observations and the coercivity of $A'_i(F_i\nu)$ since
        \begin{align*}
            c\|F'_i(\nu)\eta-F'_i(\lambda)\eta\|_{V_i}^2&\leq\Bigl\langle A'_i(F_i\nu)\bigl(F'_i(\nu)-F'_i(\lambda)\bigr)\eta, F'_i(\nu)\eta-F'_i(\lambda)\eta\Bigr\rangle\\
            &=\Bigl\langle \bigl(A'_i(F_i\lambda)-A'_i(F_i\nu)\bigr)F'_i(\lambda)\eta, F'_i(\nu)\eta-F'_i(\lambda)\eta\Bigr\rangle\\
            &\leq C\|\lambda-\nu\|_\Lambda\|F'_i(\lambda)\eta\|_{V_i}\|F'_i(\nu)\eta-F'_i(\lambda)\eta\|_{V_i}
        \end{align*}
        for all $\nu,\lambda\in U$ and $\eta\in\Lambda$. Dividing by $\|F'_i(\nu)\eta-F'_i(\lambda)\eta\|_{V_i}$ finishes the proof since we have shown that $F'_i$ is bounded.
    \end{proof}
    
    The Steklov--Poincaré operators $S_i:\Lambda\to\Lambda^*$ are defined as 
    \begin{align*}
        \langle S_i\eta, \mu\rangle=\langle A_iF_i\eta-f_i, R_i\mu\rangle.
    \end{align*}
    We also define $S':\Lambda\to B(\Lambda,\Lambda^*)$ as
    \begin{align*}
        \langle S_i'(\nu)\eta, \mu\rangle=\langle A_i'(F_i\nu)F_i'(\nu)\eta, R_i\mu\rangle.
    \end{align*}
    Moreover, we introduce $S=S_1+S_2$. The Steklov--Poincaré equation is then to find $\eta\in\Lambda$ such that
    \begin{equation}\label{eq:sp}
        \langle S\eta, \mu\rangle=0\quad\text{for all }\mu\in\Lambda.
    \end{equation}
    The Steklov--Poincaré equation is equivalent to the transmission problem by the formulas $\eta=T_iu_i$ and $(u_1, u_2)=(F_1\eta, F_2\eta)$, see~\cite[Lemma 6.2]{eeeh22} for details.
    \begin{theorem}\label{thm:sp}
        Suppose that~\cref{ass:eq,ass:dom} hold. The Steklov--Poincaré operators $S_i:\Lambda\rightarrow\Lambda^*$ are Fréchet differentiable with Fréchet derivatives $S_i':\Lambda\rightarrow S(\Lambda, \Lambda^*)$. The operators $S,S_i,S_i'$ are Lipschitz continuous on any bounded set $U\subset \Lambda$. Moreover, the operators $S, S_i$ are uniformly monotone.
    \end{theorem}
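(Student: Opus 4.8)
The plan is to transfer each asserted property of $S_i$ from the corresponding property of $A_i$ in \cref{thm:Ai} and of $F_i$ in \cref{thm:Fi}, using two elementary facts. First, $\langle S_i\eta,\mu\rangle = \langle A_iF_i\eta - f_i, R_i\mu\rangle$, so $S_i$ is the composition of $F_i$, the map $v\mapsto A_iv - f_i$, and the bounded linear map $R_i^*\colon V_i^*\to\Lambda^*$ adjoint to $R_i$. Second, $F_i\eta - F_i\lambda$ and $R_i(\eta-\lambda)$ both have trace $\eta-\lambda$ on $\Gamma$, hence $F_i\eta - F_i\lambda - R_i(\eta-\lambda)\in V_i^0$, and likewise $R_i\mu - F_i'(\nu)\mu\in V_i^0$; on $V_i^0$ the weak equations \cref{eq:Fi,eq:hatFi} defining $F_i$ and $F_i'$ vanish. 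The statements for $S = S_1 + S_2$ then follow at once, since sums of uniformly monotone operators, and of operators Lipschitz continuous on bounded sets, inherit those properties.

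For the uniform monotonicity of $S_i$ I would note that the definition of $S_i$ (the constants $f_i$ cancel) together with $F_i\eta - F_i\lambda - R_i(\eta-\lambda)\in V_i^0$ and \cref{eq:Fi} gives
\[
\langle S_i\eta - S_i\lambda, \eta - \lambda\rangle = \langle A_iF_i\eta - A_iF_i\lambda, R_i(\eta-\lambda)\rangle = \langle A_iF_i\eta - A_iF_i\lambda, F_i\eta - F_i\lambda\rangle \geq c\|F_i\eta - F_i\lambda\|_{V_i}^2,
\]
the last step by uniform monotonicity of $A_i$; since $T_i$ is bounded and $T_i(F_i\eta - F_i\lambda) = \eta - \lambda$, this is bounded below by $c\|\eta-\lambda\|_\Lambda^2$. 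Lipschitz continuity on a bounded set $U\subset\Lambda$ is equally short: $|\langle S_i\eta - S_i\lambda,\mu\rangle|\leq \|A_iF_i\eta - A_iF_i\lambda\|_{V_i^*}\|R_i\mu\|_{V_i}$, and $F_i$ maps $U$ into a bounded set on which $A_i$ is Lipschitz, while $F_i|_U$ and $R_i$ are Lipschitz, so this is $\leq C\|\eta-\lambda\|_\Lambda\|\mu\|_\Lambda$.

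Fréchet differentiability is the chain rule made explicit. With $u = F_i\nu$ and $k = F_i(\nu+h) - F_i\nu$, I would split
\[
A_i(u+k) - A_i(u) - A_i'(u)F_i'(\nu)h = \bigl(A_i(u+k) - A_i(u) - A_i'(u)k\bigr) + A_i'(u)\bigl(k - F_i'(\nu)h\bigr),
\]
bound the first term by Fréchet differentiability of $A_i$ together with $\|k\|_{V_i}\leq C\|h\|_\Lambda$ (Lipschitz continuity of $F_i$ near $\nu$), giving $o(\|h\|_\Lambda)$, and the second by $\|A_i'(u)\|_{B(V_i,V_i^*)}\|k - F_i'(\nu)h\|_{V_i}$, which is $o(\|h\|_\Lambda)$ by Fréchet differentiability of $F_i$; applying $R_i^*$ yields $\|S_i(\nu+h) - S_i\nu - S_i'(\nu)h\|_{\Lambda^*} = o(\|h\|_\Lambda)$ with $S_i'$ as defined. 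That $S_i'(\nu)$ is linear and bounded is clear; for symmetry, using $R_i\mu - F_i'(\nu)\mu\in V_i^0$ and \cref{eq:hatFi},
\[
\langle S_i'(\nu)\eta,\mu\rangle = \langle A_i'(F_i\nu)F_i'(\nu)\eta, R_i\mu\rangle = \langle A_i'(F_i\nu)F_i'(\nu)\eta, F_i'(\nu)\mu\rangle,
\]
which is symmetric in $\eta,\mu$ because $A_i'(F_i\nu)\in S(V_i,V_i^*)$ by \cref{thm:Ai}; hence $S_i'\colon\Lambda\to S(\Lambda,\Lambda^*)$. Finally, Lipschitz continuity of $S_i'$ on a bounded $U$ follows from
\[
A_i'(F_i\nu)F_i'(\nu)\eta - A_i'(F_i\lambda)F_i'(\lambda)\eta = A_i'(F_i\nu)\bigl(F_i'(\nu) - F_i'(\lambda)\bigr)\eta + \bigl(A_i'(F_i\nu) - A_i'(F_i\lambda)\bigr)F_i'(\lambda)\eta
\]
and the facts that $F_i(U)$ is bounded (so $A_i'$ is bounded and Lipschitz there), $F_i$ is Lipschitz on $U$, and $F_i'$ is bounded and Lipschitz on $U$, all from \cref{thm:Ai,thm:Fi}; each term is then $\leq C\|\nu-\lambda\|_\Lambda\|\eta\|_\Lambda$, and pairing with $R_i\mu$ gives $\|S_i'(\nu) - S_i'(\lambda)\|_{B(\Lambda,\Lambda^*)}\leq C\|\nu-\lambda\|_\Lambda$.

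The main obstacle I anticipate is the chain-rule step: because $A_i'$ is only bounded on bounded sets, one must work in a neighborhood of $\nu$ on which $F_i$ is Lipschitz, in order to control $\|k\|_{V_i}$ by $\|h\|_\Lambda$ and to treat $A_i'(u)$ as a fixed bounded operator when estimating the remainder. A secondary point is the symmetry of $S_i'(\nu)$, which is not visible from its definition and hinges on replacing $R_i\mu$ by $F_i'(\nu)\mu$ modulo $V_i^0$ before invoking the symmetry of $A_i'(F_i\nu)$.
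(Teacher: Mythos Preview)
Your proposal is correct and follows essentially the same approach as the paper: the symmetry argument via $R_i\mu - F_i'(\nu)\mu\in V_i^0$, the Lipschitz estimate for $S_i'$ via the add--subtract decomposition of $A_i'(F_i\nu)F_i'(\nu)\eta - A_i'(F_i\lambda)F_i'(\lambda)\eta$, and the identification $S_i = R_i^*(A_iF_i - f_i)$ are all exactly what the paper does. The only differences are that you spell out the chain rule and the uniform monotonicity explicitly, whereas the paper invokes \cite[Lemma~9.6]{teschl} for the former and \cite[Theorem~1]{eeeh24apnum} for the latter; your direct arguments for both are correct and in fact coincide with what those references contain.
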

    \begin{proof}
        First let $\nu\in \Lambda$ and consider $S_i'(\nu)$. That $S_i'(\nu)\in B(\Lambda, \Lambda^*)$ follows from
        \begin{align*}
            \bigl|\langle S_i'(\nu)\eta, \mu\rangle\bigr|=\bigl|\langle A_i'(F_i\nu)F_i'(\nu)\eta, R_i\mu\rangle\bigr|\leq\|A_i'(F_i\nu)F_i'(\nu)\eta\|_{V_i}\|R_i\mu\|_{V_i}\leq C\|\eta\|_\Lambda\|\mu\|_\Lambda.
        \end{align*}
        Moreover, by the symmetry of $A_i'(F_i\nu)$ and the fact that $R_i\mu-F_i'(\nu)\mu\in V_i^0$ we have
        \begin{align*}
             \langle S_i'(\nu)\eta, \mu\rangle&=\langle A_i'(F_i\nu)F_i'(\nu)\eta, R_i\mu\rangle\\
             &=\langle A_i'(F_i\nu)F_i'(\nu)\eta, F_i'(\nu)\mu\rangle+\langle A_i'(F_i\nu)F_i'(\nu)\eta, R_i\mu-F_i'(\nu)\mu\rangle\\
             &=\langle A_i'(F_i\nu)F_i'(\nu)\eta, F_i'(\nu)\mu\rangle\\
             &=\langle A_i'(F_i\nu)F_i'(\nu)\mu, F_i'(\nu)\eta\rangle\\
             &=\langle A_i'(F_i\nu)F_i'(\nu)\mu, R_i\eta\rangle\\
             &=\langle S_i'(\nu)\mu, \eta\rangle,
        \end{align*}
        which shows that $S_i'(\nu)\in S(\Lambda, \Lambda^*)$. 
    
        Now note that $S_i=R_i^*(A_iF_i-f_i)$, where $R_i^*:V_i^*\rightarrow \Lambda^*$ is the bounded linear operator that maps $\langle R_i^*u, \mu\rangle=\langle u, R_i\mu\rangle$. From~\cref{thm:Ai,thm:Fi} we have that $S_i$ is Lipschitz continuous and the same holds for $S$. We also have from~\cref{thm:Ai,thm:Fi} and the chain rule~\cite[Lemma 9.6]{teschl} that $S_i$ is Fréchet differentiable with Fréchet derivative $R_i^*A_i'(F_i\nu)F_i'(\nu)=S_i'(\nu)$. Moreover, the monotonicity of $S_i, S$ follows as in~\cite[Theorem 1]{eeeh24apnum}.
        
        It remain to show that $S_i'$ is Lipschitz continuous on any bounded subset $U\subset\Lambda$. We first note that $F_i$ and $F_i'$ are Lipschitz continuous on $U$ and $A_i'$ is Lipschitz continuous on the bounded set $F_i(U)$. Therefore
        \begin{align*}
            &\bigl|\bigl\langle\bigl(S'_i(\nu_1)-S'_i(\nu_2)\bigr)\eta, \mu\bigr\rangle\bigr|=\bigl|\bigl\langle A_i'(F_i\nu_1)F_i'(\nu_1)\eta-A_i'(F_i\nu_2)F_i'(\nu_2)\eta, R_i\mu\bigr\rangle\bigr|\\
            &\quad\leq \bigl\|A_i'(F_i\nu_1)F_i'(\nu_1)\eta-A_i'(F_i\nu_2)F_i'(\nu_2)\eta\bigr\|_{V_i^*}\|R_i\mu\|_{V_i}\\
            &\quad=\bigl\|A_i'(F_i\nu_1)\bigl(F_i'(\nu_1)-F_i'(\nu_2)\bigr)\eta-\bigl(A_i'(F_i\nu_2)-A_i'(F_i\nu_1)\bigr)F_i'(\nu_2)\eta\bigr\|_{V_i^*}\|R_i\mu\|_{V_i}\\
            &\quad\leq\Bigl(\bigl\|A_i'(F_i\nu_1)\bigl(F_i'(\nu_1)-F_i'(\nu_2)\bigr)\eta\bigr\|_{V_i^*}\\
            &\qquad+\bigl\|\bigl(A_i'(F_i\nu_2)-A_i'(F_i\nu_1)\bigr)F_i'(\nu_2)\eta\bigr\|_{V_i^*}\Bigr)\|R_i\mu\|_{V_i}\\
            &\quad\leq\Bigl(C\|F_i'(\nu_1)\eta-F_i'(\nu_2)\eta\|_{V_i^*}+\|F_i\nu_2-F_i\nu_1\|_{V_i}\|F_i'(\nu_2)\eta\|_{V_i}\Bigr)\|R_i\mu\|_{V_i}\\
            &\quad\leq\Bigl(C\|\nu_1-\nu_2\|_{\Lambda}\|\eta\|_\Lambda+\|\nu_2-\nu_1\|_{\Lambda}\|\eta\|_\Lambda\Bigr)\|\mu\|_{\Lambda}\\
            &\quad\leq C\|\nu_1-\nu_2\|_{\Lambda}\|\eta\|_\Lambda\|\mu\|_{\Lambda}
        \end{align*}
        for all $\nu_1,\nu_2\in U$ and $\eta,\mu\in\Lambda$. This implies that
        \begin{align*}
            &\|S'_i(\nu_1)-S'_i(\nu_2)\|_{S(\Lambda, \Lambda^*)}=\sup_{\eta\in\Lambda\setminus\{0\}}\frac{\bigl\|\bigl(S'_i(\nu_1)-S'_i(\nu_2)\bigr)\eta\bigr\|_{\Lambda^*}}{\|\eta\|_\Lambda}\\
            &\quad=\sup_{\eta, \mu\in\Lambda\setminus\{0\}}\frac{\bigl|\bigl\langle\bigl(S'_i(\nu_1)-S'_i(\nu_2)\bigr)\eta, \mu\bigr\rangle\bigr|}{\|\eta\|_\Lambda\|\mu\|_\Lambda}\leq C\|\nu_1-\nu_2\|_{\Lambda}
        \end{align*}
        for all $\nu_1,\nu_2\in U$, which means that $S_i'$ is Lipschitz continuous on $U$. 
    \end{proof}
    \section{Convergence of the Dirichlet--Neumann method}\label{sec:dn}
    The weak formulation of the Dirichlet--Neumann method is to find $(u_1^{n+1}, u_2^{n+1})\in V_1\times V_2$ for $n=0,1,2,\dots$ such that
    \begin{equation}\label{eq:dnweak}
    	\left\{\begin{aligned}
    		&\langle A_1u_1^{n+1}, v_1\rangle=\langle f_1, v_1\rangle & &\text{for all } v_1\in V_1^0,\\
    		&T_1u_1^{n+1}=\eta^n,\\[5pt]
    		&\langle A_2u_2^{n+1}, v_2\rangle=\langle f_2, v_2\rangle & &\text{for all } v_2\in V_2^0,\\
    		&\langle A_2u_2^{n+1}, R_2\mu\rangle-\langle f_2, R_2\mu\rangle+\langle A_1u_1^{n+1}, R_1\mu\rangle & &\\
    		&\quad-\langle f_1, R_1\mu\rangle=0 & &\text{for all }\mu\in \Lambda,
    	\end{aligned}\right.
    \end{equation}
    with $\eta^{n+1}=sT_2u_2^{n+1}+(1-s)\eta^n$. The interface iteration corresponding to the Dirichlet--Neumann method is to find $\eta^{n+1}\in\Lambda$ for $n=0,1,2,\dots$ such that
    \begin{equation}\label{eq:dnsp}
        \eta^{n+1} = (1-s)\eta^n+sS_2^{-1}(0-S_1\eta^n).
    \end{equation}
    The equivalence of the Dirichlet--Neumann method and the interface iteration is stated precisely in the following lemma.
    \begin{lemma}\label{lem:dnsp}
        Suppose that~\cref{ass:eq,ass:dom} hold. The Dirichlet--Neumann method is equivalent to the interface iteration in the following way. If $\{\eta^n\}_{n\geq 0}$ solves~\cref{eq:dnsp} then $\{(u_1^{n+1}, u_2^{n+1})\}_{n\geq 0}$ defined by
        \begin{align*}
            u_1^{n+1}&=F_1\eta^n\\
            u_2^{n+1}&=F_2\biggl(\frac{\eta^{n+1}-(1-s)\eta^n}{s}\biggr)
        \end{align*}
        solves~\cref{eq:dnweak}. Conversely, if $\{(u_1^{n+1}, u_2^{n+1})\}_{n\geq 0}$ solves~\cref{eq:dnweak} then $\{\eta^n=T_iu_1^{n+1}\}_{n\geq 0}$ solves~\cref{eq:dnsp}.
    \end{lemma}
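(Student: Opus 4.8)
The plan is to verify both implications by direct substitution into the defining equations, using three ingredients established earlier: $F_i$ is the \emph{unique} solution operator of the Dirichlet problem \cref{eq:Fi}; the Steklov--Poincar\'e operators satisfy $\langle S_i\eta,\mu\rangle=\langle A_iF_i\eta-f_i,R_i\mu\rangle$ by definition; and $S_2$ is invertible. The last point follows from \cref{thm:sp}, since $S_2$ is uniformly monotone and Lipschitz continuous on every bounded set and is therefore bijective by the first bullet of \cref{thm:abstract}; in particular the term $S_2^{-1}(0-S_1\eta^n)$ appearing in \cref{eq:dnsp} is well defined for every $\eta^n\in\Lambda$.

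For the forward direction I would suppose $\{\eta^n\}$ solves \cref{eq:dnsp} and set $\xi^{n+1}=(\eta^{n+1}-(1-s)\eta^n)/s=S_2^{-1}(-S_1\eta^n)\in\Lambda$, so that $u_1^{n+1}=F_1\eta^n$ and $u_2^{n+1}=F_2\xi^{n+1}$. By the definition of $F_1$, the first two lines of \cref{eq:dnweak} hold (the interior equation and $T_1u_1^{n+1}=\eta^n$), and the interior equation for $u_2^{n+1}$ holds by the definition of $F_2$. Rewriting the transmission line of \cref{eq:dnweak} via the definition of $S_i$ turns it into $\langle S_2\xi^{n+1}+S_1\eta^n,\mu\rangle=0$ for all $\mu\in\Lambda$, i.e.\ $S_2\xi^{n+1}=-S_1\eta^n$, which holds by the choice of $\xi^{n+1}$. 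Finally $T_2u_2^{n+1}=T_2F_2\xi^{n+1}=\xi^{n+1}$, whence $sT_2u_2^{n+1}+(1-s)\eta^n=\eta^{n+1}$, which is the required update rule.

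For the converse I would suppose $\{(u_1^{n+1},u_2^{n+1})\}$ solves \cref{eq:dnweak} and put $\eta^n=T_1u_1^{n+1}$. The first two lines of \cref{eq:dnweak} state exactly that $u_1^{n+1}$ solves \cref{eq:Fi} with boundary datum $\eta^n$, so $u_1^{n+1}=F_1\eta^n$ by uniqueness; analogously, writing $\zeta^{n+1}=T_2u_2^{n+1}\in\Lambda$, the interior equation for $u_2^{n+1}$ together with $T_2u_2^{n+1}=\zeta^{n+1}$ gives $u_2^{n+1}=F_2\zeta^{n+1}$. Then the transmission line of \cref{eq:dnweak}, rewritten with the definition of $S_i$, becomes $\langle S_2\zeta^{n+1},\mu\rangle=-\langle S_1\eta^n,\mu\rangle$ for all $\mu\in\Lambda$, hence $\zeta^{n+1}=S_2^{-1}(-S_1\eta^n)$. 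Substituting this into $\eta^{n+1}=sT_2u_2^{n+1}+(1-s)\eta^n=s\zeta^{n+1}+(1-s)\eta^n$ yields \cref{eq:dnsp}.

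The argument is essentially bookkeeping with the operators $F_i$ and $S_i$; the only substantive points are uniqueness for \cref{eq:Fi}, which lets us identify $u_1^{n+1}$ and $u_2^{n+1}$ as images of $F_1$ and $F_2$, and the invertibility of $S_2$. I expect the one place needing care is the converse: the second subproblem in \cref{eq:dnweak} carries a flux (Neumann-type) transmission condition rather than a Dirichlet condition, so one must first re-parametrize $u_2^{n+1}$ by its own trace $\zeta^{n+1}$ before the Steklov--Poincar\'e identity can be applied.
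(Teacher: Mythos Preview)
Your proposal is correct and follows essentially the same approach as the paper: both arguments identify $u_1^{n+1}=F_1\eta^n$ and $u_2^{n+1}=F_2\xi^{n+1}$ via the definition of $F_i$, and then translate the transmission line of \cref{eq:dnweak} into $S_2\xi^{n+1}=-S_1\eta^n$ using the identity $\langle S_i\eta,\mu\rangle=\langle A_iF_i\eta-f_i,R_i\mu\rangle$. Your write-up is more explicit than the paper's (in particular about the invertibility of $S_2$ and about first re-parametrizing $u_2^{n+1}$ by its trace in the converse), but the underlying argument is the same.
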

    \begin{proof}
        The first two equations of~\cref{eq:dnweak} are equivalent to $u_1^{n+1}=F_1\eta^n$ and the third equation of~\cref{eq:dnweak} together with the update equation $\eta^{n+1}=sT_2u_2^{n+1}+(1-s)\eta^n$ is equivalent to
        \begin{displaymath}
            u_2^{n+1}=F_2\biggl(\frac{\eta^{n+1}-(1-s)\eta^n}{s}\biggr).
        \end{displaymath}

        If we write out~\cref{eq:dnsp} using $S_i=R_i^*A_iF_i-R_i^*f_i$ we find that it is equivalent to
        \begin{displaymath}
            R_2^*A_2F_2\biggl(\frac{\eta^{n+1}-(1-s)\eta^n}{s}\biggr)-R_2^*f_2=R_1^*A_1F_1\eta^n-R_1^*f_1 \quad\text{in }\Lambda^*,
        \end{displaymath}
        which is exactly the last equation of~\cref{eq:dnweak}.
    \end{proof}
    We finally end up with the following convergence result, which is a direct consequence of~\cref{thm:abstract,thm:sp,lem:dnsp}.
    \begin{corollary}
        Suppose that~\cref{ass:eq,ass:dom} hold. Let $s>0$ be small enough and $\eta^0\in\Lambda$ close enough to $\eta$, the solution to the Steklov--Poincaré equation~\cref{eq:sp}. The iteration~\cref{eq:dnsp} converges linearly to $\eta$ in $\Lambda$. Moreover, the iterates of the Dirichlet--Neumann method~\cref{eq:dnweak} converges linearly to $(u_1, u_2)$, the solution of~\cref{eq:weaktran}, in $V_1\times V_2$.
    \end{corollary}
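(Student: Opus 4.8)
The plan is to apply \cref{thm:abstract} with the Hilbert space $X=\Lambda$, the operator $G=S=S_1+S_2$, the right-hand side $\chi=0\in\Lambda^*$, and the splitting $G_1=S_1$, $G_2=S_2$. First I would check that every hypothesis of \cref{thm:abstract} is supplied by \cref{thm:sp}: the operators $S$, $S_1$, $S_2$ are uniformly monotone; $S$ and $S_2$ are Lipschitz continuous on every bounded subset of $\Lambda$, so the first two bullets apply and yield bijectivity of $S$ and $S_2$, a Lipschitz continuous inverse $S_2^{-1}$, well-definedness of \cref{eq:dnsp}, and a unique solution $\eta\in\Lambda$ of the Steklov--Poincaré equation \cref{eq:sp}; finally $S_2$ is Fréchet differentiable with Fréchet derivative $S_2':\Lambda\to S(\Lambda,\Lambda^*)$ that is Lipschitz continuous on every bounded subset. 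To invoke the third bullet I would fix any bounded, open, convex set $U\subset\Lambda$ with $\eta\in U$ (for instance an open ball centred at $\eta$); then the Lipschitz continuity of $S_1$ and of $S_2'$ on bounded sets gives, in particular, Lipschitz continuity on $U$. \cref{thm:abstract} then provides, for $\eta^0$ close enough to $\eta$ and $s>0$ small enough, constants $C>0$ and $0\le L<1$ with $\|\eta^n-\eta\|_\Lambda\le CL^n\|\eta^0-\eta\|_\Lambda$, and by \cref{lem:dnsp} the sequence $\{\eta^n\}$ generated here is precisely the interface iteration \cref{eq:dnsp}.

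It remains to transfer this convergence to the subdomain iterates. By \cref{lem:dnsp} we have $u_1^{n+1}=F_1\eta^n$ and $u_2^{n+1}=F_2\bigl((\eta^{n+1}-(1-s)\eta^n)/s\bigr)$, while the solution of the transmission problem \cref{eq:weaktran} is $(u_1,u_2)=(F_1\eta,F_2\eta)$ with $\eta$ the solution of \cref{eq:sp}. Since $\eta^n\to\eta$, the set $\{\eta^n\}_{n\ge0}\cup\{\eta\}$ is bounded, so $F_1$ is Lipschitz continuous on it by \cref{thm:Fi}, whence $\|u_1^{n+1}-u_1\|_{V_1}=\|F_1\eta^n-F_1\eta\|_{V_1}\le C\|\eta^n-\eta\|_\Lambda\le CL^n\|\eta^0-\eta\|_\Lambda$. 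For the second component set $\zeta^n=(\eta^{n+1}-(1-s)\eta^n)/s$, so that $\zeta^n-\eta=\bigl((\eta^{n+1}-\eta)-(1-s)(\eta^n-\eta)\bigr)/s$ and hence $\|\zeta^n-\eta\|_\Lambda\le s^{-1}\bigl(CL^{n+1}+(1-s)CL^n\bigr)\|\eta^0-\eta\|_\Lambda\le CL^n\|\eta^0-\eta\|_\Lambda$. In particular $\{\zeta^n\}_{n\ge0}\cup\{\eta\}$ is bounded, $F_2$ is Lipschitz continuous on it, and $\|u_2^{n+1}-u_2\|_{V_2}\le C\|\zeta^n-\eta\|_\Lambda\le CL^n\|\eta^0-\eta\|_\Lambda$. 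Adding the two estimates gives linear convergence of the Dirichlet--Neumann iterates in $V_1\times V_2$.

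Because the corollary is assembled entirely from results already established, there is no single hard step; the one point that deserves care is verifying that the hypotheses of \cref{thm:abstract} are met with the right uniformity — namely that ``Lipschitz/Fréchet-differentiable on every bounded set'' from \cref{thm:sp} does yield the corresponding property on the chosen bounded set $U$, that $S_1$ and $S_2$ are both uniformly monotone as required by the third bullet, and that the choice $\chi=0$ is admissible. The linear rate for the subdomain iterates is then purely a consequence of the Lipschitz continuity of $F_1$ and $F_2$ on bounded sets combined with the already-proven linear convergence of $\eta^n$.
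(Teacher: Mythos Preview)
Your proposal is correct and follows essentially the same approach as the paper: apply \cref{thm:abstract} with $G_i=S_i$ and $\chi=0$, using \cref{thm:sp} to verify the hypotheses and \cref{lem:dnsp} to identify the iteration, then transfer the linear convergence of $\eta^n$ to the subdomain iterates via the Lipschitz continuity of $F_1,F_2$ on bounded sets from \cref{thm:Fi}. Your treatment is in fact slightly more careful than the paper's, since you explicitly verify that the arguments $\zeta^n$ fed into $F_2$ form a bounded set before invoking Lipschitz continuity there.
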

    \begin{proof}
        From~\cref{thm:abstract,thm:sp} it follows that $\eta^n$ converges linearly to $\eta$, i.e,
        \begin{displaymath}
        	   \|\eta^n-\eta\|_\Lambda\leq C L^n\|\eta^0-\eta\|_\Lambda.
        \end{displaymath}
        In particular, it follows that $\eta^n$ is bounded so there exists a convex and bounded set $U$ that contains $\eta^n$ and $\eta$. Since $F_1$ is Lipschitz continuous on $U$ we have
        \begin{displaymath}
            \|u_1^{n+1}-u_1\|_{V_i}=\|F_1\eta^n-F_1\eta\|_{V_i}\leq C\|\eta^n-\eta\|_\Lambda\leq C L^n\|\eta^0-\eta\|_\Lambda.
        \end{displaymath}
        Since $F_2$ is also Lipschitz continuous on $U$ we have
        \begin{align*}
            &\|u_2^{n+1}-u_2\|_{V_i}=\biggl\|F_2\biggl(\frac{\eta^{n+1}-(1-s)\eta^n}{s}\biggr)-F_2\eta\biggr\|_{V_i}\leq C\biggl\|\frac{\eta^{n+1}-(1-s)\eta^n}{s}-\eta\biggr\|_\Lambda\\
            &\quad= \frac{C}{s}\|(\eta^{n+1}-\eta)-(1-s)(\eta^n-\eta)\|_\Lambda\leq C\bigl(\|\eta^{n+1}-\eta\|_\Lambda+\|\eta^n-\eta)\|_\Lambda\bigr)\\
            &\quad\leq C L^n\|\eta^0-\eta\|_\Lambda.
        \end{align*}
        Therefore, $(u_1^n, u_2^n)$ converges linearly to $(u_1, u_2)$ in $V_1\times V_2$.
    \end{proof}
    \section{Numerical results}\label{sec:num}
    For our numerical results we consider~\cref{ex:ex1,ex:ex2} on the domain $\Omega=(0, 3)\times (0, 2)$ and
        \begin{displaymath}
            f(x, y) = x(3-x)y(2-y).
        \end{displaymath}
        We introduce the Lipschitz domain decomposition as illustrated in~\cref{fig:L} and discretize using the finite element spaces $V^h\subset V$, $V_i^h\subset V_i$ of linear finite elements with mesh size $h$. While we will not discuss it further here, it is possible to construct a discrete variant of our analysis and prove convergence in the finite element space $V_i^h$, see, e.g.~\cite[Section 8]{eeeh24apnum}.
    
    \begin{figure}
        \centering
        \includegraphics[width=0.45\linewidth]{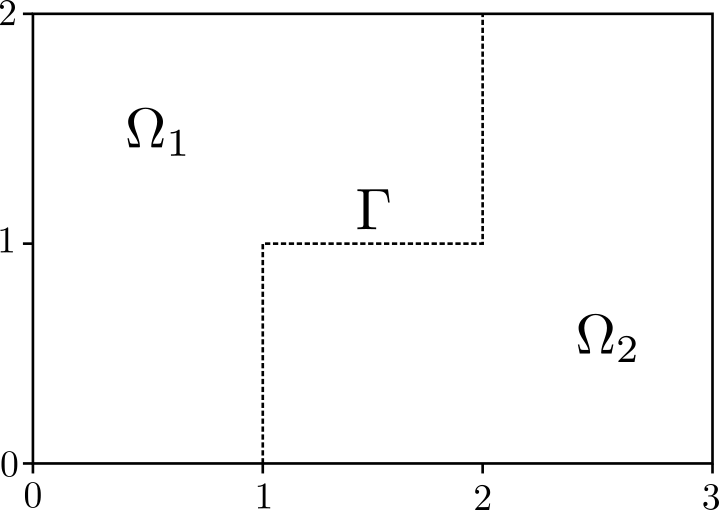}\qquad
        \includegraphics[width=0.45\linewidth]{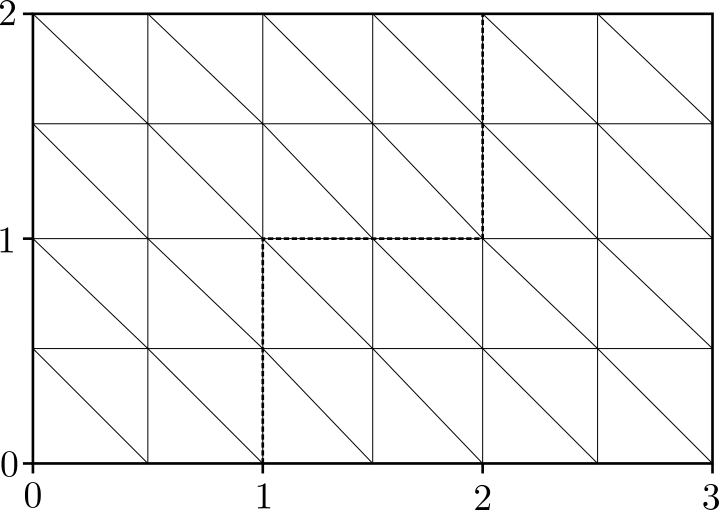}
        \caption{The domain and domain decomposition used in our numerical example (left) and the mesh exemplified with $h=1/2$ (right).}
        \label{fig:L}
    \end{figure}%

    We compute the iteration for three meshes with the mesh sizes $h=(1/64, 1/128, 1/256)$. We use the parameter $s=0.36$ for~\cref{ex:ex1} and $s=0.31$ for~\cref{ex:ex1} since these have been found to be close to optimal. For each mesh we calculate the relative error of our iteration $u_i^{n,h}$ compared to the solutions $u_i^h$ of the finite element problem on $\Omega$ restricted to the subdomains $\Omega_i$. That is,
    \begin{displaymath}
        e=\frac{\|u_1^{n, h}-u_1^h\|_{V_1}+\|u_2^{n, h}-u_2^h\|_{V_2}}{\|u_1^h\|_{V_1}+\|u_2^h\|_{V_2}}.
    \end{displaymath}
    For both examples, we plot the error at each iteration in~\cref{fig:err}. From this we see that the iteration converges linearly, which we have proven for~\cref{ex:ex1}. However, we also see that the same holds for~\cref{ex:ex2}, but with a slightly slower convergence rate. Moreover, we find that in both cases the convergence is independent of the mesh size $h$.
    
    We also compare the Dirichlet--Neumann method with the Neumann--Neumann method~\cite{eeeh24apnum} and the Robin--Robin method~\cite{eeeh22} on the same mesh with mesh size $h=1/256$. For the Robin--Robin method the parameter is $s=46$, which have been chosen since it is near optimal and for the Neumann--Neumann the parameter is $s_1=s_2=0.02$ since it minimizes the error at which the method stagnates. The results are in~\cref{fig:err2}. We find that the Dirichlet--Neumann method converges faster than the Robin--Robin method for both~\cref{ex:ex1} and~\cref{ex:ex2}. The Neumann--Neumann method performs better for~\cref{ex:ex1}, but does not converge at all for~\cref{ex:ex2}.
    \begin{figure}
    \centering
    \includegraphics[width=0.45\linewidth]{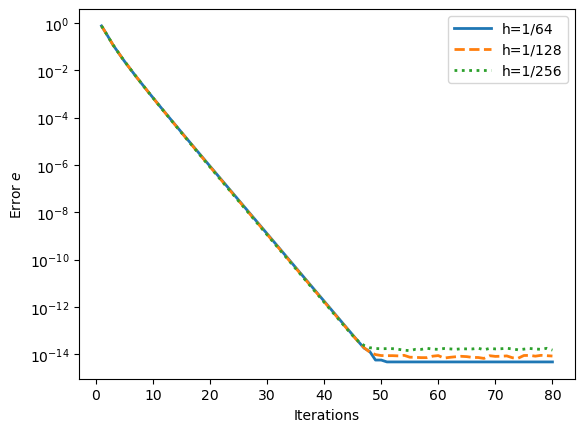}
    \includegraphics[width=0.45\linewidth]{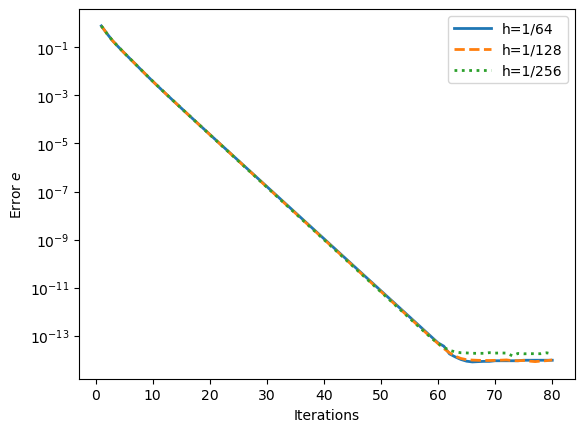}
    \caption{The error of the Dirichlet--Neumann applied to~\cref{ex:ex1} (left) and~\cref{ex:ex2} (right) together with the decomposition as in~\cref{fig:L}. Three different values of the mesh parameter $h$ is used, but for most of the iterations, they all have the same error and therefore only one can be seen.}
    \label{fig:err}
\end{figure}%

    \begin{figure}
    \centering
    \includegraphics[width=0.45\linewidth]{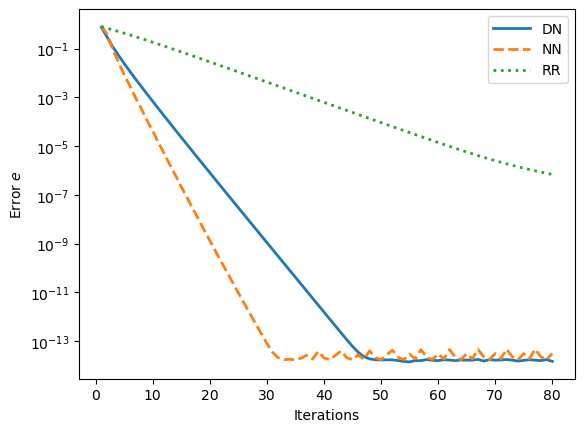}
    \includegraphics[width=0.45\linewidth]{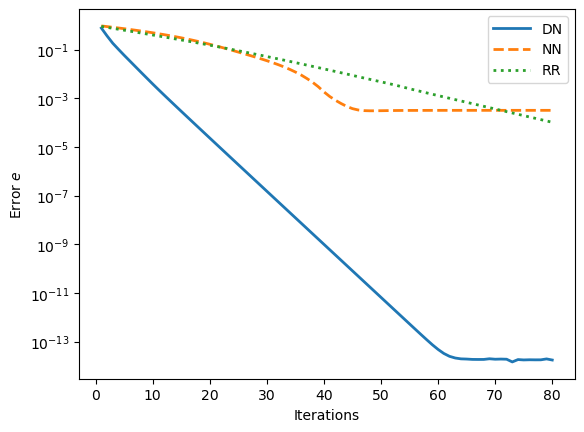}
    \caption{The error of the Dirichlet--Neumann method (DN) applied to~\cref{ex:ex1} (left) and~\cref{ex:ex2} (right) compared to the Neumann--Neumann (NN) and Robin--Robin (RR) methods.}
    \label{fig:err2}
\end{figure}%
    \bibliographystyle{siamplain}
\bibliography{ref}
\end{document}